\newcommand{\df}{\dfrac}
\newcommand{\tf}{\tfrac}
  \renewcommand{\a}{\alpha}
\renewcommand{\b}{\beta}
\newcommand{\g}{\gamma}
\newcommand{\G}{\Gamma}
\renewcommand{\(}{\left\(}
\renewcommand{\)}{\right\)}
\renewcommand{\[}{\left\[}
\renewcommand{\]}{\right\]}
\renewcommand{\i}{\infty}
\numberwithin{equation}{section}
 \theoremstyle{plain}
\newtheorem{theorem}{Theorem}[section]
\newtheorem{corollary}[theorem]{Corollary}
\newtheorem{remark}[]{Remark}
\def\proof{\@ifnextchar[{\@oproof}{\@nproof}}
\def\@oproof[#1][#2]{\trivlist\item[\hskip\labelsep\textit{#2 Proof of\
#1.}~]\ignorespaces}
\def\@nproof{\trivlist\item[\hskip\labelsep\textit{Proof.}~]\ignorespaces}
\def\@tocline#1#2#3#4#5#6#7{\relax
  \ifnum #1>\c@tocdepth % then omit
  \else
    \par \addpenalty\@secpenalty\addvspace{#2}%
    \begingroup \hyphenpenalty\@M
    \@ifempty{#4}{%
      \@tempdima\csname r@tocindent\number#1\endcsname\relax
    }{%
      \@tempdima#4\relax
    }%
    \parindent\z@ \leftskip#3\relax \advance\leftskip\@tempdima\relax
    \rightskip\@pnumwidth plus4em \parfillskip-\@pnumwidth
    #5\leavevmode\hskip-\@tempdima
      \ifcase #1
       \or\or \hskip 1em \or \hskip 2em \else \hskip 3em \fi%
      #6\nobreak\relax
    \dotfill\hbox to\@pnumwidth{\@tocpagenum{#7}}\par% <---- \dotfill -> \hfill
    \nobreak
    \endgroup
  \fi}
\begin{document}
\title[modular transformation involving a generalized digamma function]{A modular relation involving a generalized digamma function and asymptotics of some integrals containing $\Xi(t)$}
%\title[Generalized Hurwitz zeta and modified Bessel functions]{Generalized Hurwitz zeta and modified Bessel functions}
%\author{Atul Dixit and Rahul Kumar}
\thanks{2020 \textit{Mathematics Subject Classification.} Primary 11M06, 41A60; Secondary 33E20.\\
\textit{Keywords and phrases.} Ramanujan's Lost notebook, modular relation, Generalized Hurwitz zeta function, Generalized digamma function, asymptotic estimates.}

%\address{Discipline of Mathematics, Indian Institute of Technology Gandhinagar, Palaj, Gandhinagar 382355, Gujarat, India}\email{adixit@iitgn.ac.in; rahul.kumr@iitgn.ac.in}

\author{Atul Dixit}
\address{Discipline of Mathematics, Indian Institute of Technology Gandhinagar, Palaj, Gandhinagar 382355, Gujarat, India} 
\email{adixit@iitgn.ac.in}

\author{Rahul Kumar}
\address{Department of Mathematics, The Pennsylvania State University, University Park, PA 16802, U.S.A.} 
\email{rahuliitgn20@gmail.com}

\begin{abstract}
A modular relation of the form $F(\alpha, w)=F(\beta, iw)$, where $i=\sqrt{-1}$ and $\alpha\beta=1$, is obtained. It involves the generalized digamma function $\psi_w(a)$ which was recently studied by the authors in their work on developing the theory of the generalized Hurwitz zeta function $\zeta_w(s, a)$. The limiting case $w\to0$ of this modular relation is a famous result of Ramanujan on page $220$ of the Lost Notebook. We also obtain asymptotic estimate of a general integral involving the Riemann function  $\Xi(t)$ as $\alpha\to\infty$.  Not only does it give the asymptotic estimate of the integral occurring in our modular relation as a corollary but also some known results.
\end{abstract}
\maketitle
\vspace{-0.8cm}
%\tableofcontents
%\vspace{-1.3cm}

\section{Introduction}
%%In his only paper \cite{riemann} fully devoted to the topic related to the Riemann zeta function $\zeta(s)$, Ramanujan studied the following integral
%%\begin{equation}\label{sec5int}
%%\int_{0}^{\infty}\Gamma\left(\frac{z-1+it}{4}\right)\Gamma\left(\frac{z-1-it}{4}\right)
%%\Xi\left(\frac{t+iz}{2}\right)\Xi\left(\frac{t-iz}{2}\right)\frac{\cos (nt)}{(z+1)^2+t^2}\, dt, 
%%\end{equation}
The Riemann functions $\xi(s)$ and $\Xi(t)$ are defined by \cite[p.~16]{titch}
\begin{align}\label{cdefn1}
\xi(s)=\frac{1}{2}s(s-1)\pi^{-\frac{s}{2}}\Gamma\left(\frac{s}{2}\right)\zeta(s),
\end{align}
and 
\begin{align}\label{cdefn2}
\Xi(t)=\xi\left(\frac{1}{2}+it\right),
\end{align}
where $\Gamma(s)$ and $\zeta(s)$ are Gamma and the Riemann zeta functions respectively.

Over the years, integrals comprising the Riemann $\Xi$-function in their integrands and their corresponding transformation formulas have found to be very useful in the theory of the Riemann zeta function $\zeta(s)$. Hardy \cite{Har} was one of the first mathematicians who realized the usefulness of such integrals when he proved his remarkable result that the infinitely many non-trivial zeros of $\zeta(s)$ lie on the critical line Re$(s)=1/2$. The crux of his argument relies on the identity
\begin{align}\label{thetaab}
\sqrt{\alpha}\bigg(\frac{1}{2\alpha}-\sum_{n=1}^{\infty}e^{-\pi\alpha^2n^2}\bigg)&=\sqrt{\beta}\bigg(\frac{1}{2\beta}-\sum_{n=1}^{\infty}e^{-\pi\beta^2n^2}\bigg)\nonumber\\
&=\frac{2}{\pi}\int_{0}^{\infty}\frac{\Xi(t/2)}{1+t^2}\cos\bigg(\frac{1}{2}t\log \a\bigg)\, dt,
\end{align} 
where $\a\b=1$ with Re$(\alpha^2)>0$ and Re$(\beta^2)>0$. Note that the first equality in the above identity is equivalent to the transformation formula for the Jacobi theta function:
\begin{equation*}
\sum_{n=-\infty}^{\infty}e^{-\pi\a^2n^2}=\frac{1}{\a}\sum_{n=-\infty}^{\infty}e^{-\pi n^2/\a^2} \quad (\mathrm{Re}(\alpha^2)>0).
\end{equation*}
A generalization of \eqref{thetaab} is given by \cite[Theorem 1.2]{dixit}
\begin{align}\label{genthetatr}
\sqrt{\alpha}\bigg(\frac{e^{-\frac{w^2}{8}}}{2\alpha}-e^{\frac{w^2}{8}}\sum_{n=1}^{\infty}e^{-\pi\alpha^2n^2}\cos(\sqrt{\pi}\alpha nw)\bigg)&=\sqrt{\beta}\bigg(\frac{e^{\frac{w^2}{8}}}{2\beta}-e^{-\frac{w^2}{8}}\sum_{n=1}^{\infty}e^{-\pi\beta^2n^2}\cosh(\sqrt{\pi}\beta nw)\bigg)\nonumber\\
&=\frac{1}{\pi}\int_{0}^{\infty}\frac{\Xi(t/2)}{1+t^2}\nabla\left(\alpha,w,\frac{1+it}{2}\right)\, dt,
\end{align}
where
\begin{align*}
\nabla(x,w,s)&:=\rho(x,w,s)+\rho(x,w,1-s),\nonumber\\
\rho(x,w,s)&:=x^{\frac{1}{2}-s}e^{-\frac{w^2}{8}}{}_1F_{1}\left(\frac{1-s}{2};\frac{1}{2};\frac{w^2}{4}\right),
\end{align*}
where ${}_1F_{1}(a;c;z):=\sum_{n=0}^\infty\frac{(a)_n}{(c)_n}\frac{z^n}{n!}$, $(a)_n=\Gamma(a+n)/\Gamma(a)$,  is the confluent hypergeometric function.

The first equality in \eqref{genthetatr} is due to Jacobi; the integral in \eqref{genthetatr} was found by the first author in \cite[Theorem 1.2]{dixit}. Applications of the above identity in generalizing Hardy's result on the infinitude of the zeros of $\zeta(s)$ can be found in \cite{dkmz} and \cite{drz}.

A year after Hardy's paper \cite{Har} appeared, Ramanujan also wrote a paper on some integrals containing the function $\Xi(t)$ in their integrands. One of the results that he provided in \cite{riemann} is 
\begin{align}\label{ramanujan z=0}
&\int_{0}^{\infty}\left|\Gamma\left(\frac{-1+it}{4}\right)
\Xi\left(\frac{t}{2}\right)\right|^2\frac{\cos (nt)}{1+t^2}\, dt=\pi^{3/2}\int_{0}^{\infty}\left(\frac{1}{\exp{(xe^n)}-1}-\frac{1}{xe^n}\right)\left(\frac{1}{\exp{(xe^{-n})}-1}-\frac{1}{xe^{-n}}\right)\, dx,
\end{align}
where $n\in\mathbb{R}^+$.
%for $-1<\mathrm{Re}(z)<1$ and $n\in\mathbb{R}^+$. 

About the integral on the left-hand side of \eqref{ramanujan z=0}, Hardy \cite{ghh} says, \textit{``the properties of this integral resemble those of one which Mr. Littlewood and I have used, in a paper to be published shortly in Acta Mathematica to prove that\footnote{Note that there is a typo in this formula in that $\pi$ should not be present.}}
\begin{equation*}
\int_{-T}^{T}\left|\zeta\left(\frac{1}{2}+ti\right)\right|^2\, dt \sim
\frac{2}{\pi} T\log T\hspace{3mm}(T\to\infty)\textup{''}.
\end{equation*}
Very recently, Darses and Hillion \cite{dh} studied the polynomial moments with a weighted zeta square measure, namely,
\begin{align*}
\int_{-\infty}^\infty t^{2N}\left|\Gamma\left(\frac{1}{2}+it\right)\zeta\left(\frac{1}{2}+it\right)\right|^2dt.
\end{align*}
The starting point of their study is nothing but the Ramanujan's identity \eqref{ramanujan z=0}! Thus Hardy was indeed correct in his assessment of \eqref{ramanujan z=0}. 

In his Lost Notebook \cite{lnb}, Ramanujan provided a beautiful modular relation containing the integral on the left-hand side of \eqref{ramanujan z=0}:
\begin{align}\label{w1.26}
\sqrt{\a}\left\{\df{\gamma-\log(2\pi\a)}{2\a}+\sum_{n=1}^{\i}\phi(n\a)\right\}
&=\sqrt{\b}\left\{\df{\gamma-\log(2\pi\b)}{2\b}+\sum_{n=1}^{\i}\phi(n\b)\right\}\nonumber\\
&=-\df{1}{\pi^{3/2}}\int_0^{\i}\left|\Xi\left(\df{1}{2}t\right)\Gamma\left(\df{-1+it}{4}\right)\right|^2
\df{\cos\left(\tf{1}{2}t\log\a\right)}{1+t^2}\, dt,
\end{align}
where $\alpha,\ \beta>0$ such that $\alpha\beta=1$, and $\phi(x):=\psi(x)+\frac{1}{2x}-\log(x)$, with $\psi(x)=\Gamma'(x)/\Gamma(x)$ being the digamma function. By a modular relation, we mean a transformation of the form $F(-1/z)=F(z)$, 
where $z$  is in the upper-half plane. Unlike modular forms,  
these transformations may not be governed by the translation $z\to z+1$. 
Note that $z\to-1/z$ can be equivalently rewritten in the form $\alpha\to\beta$ with 
$\alpha\beta=1$, where Re$(\alpha)>0$ and Re$(\beta)>0$. For more details, see  \cite[p.~49]{dixitms}.

The first ever proof of this identity appeared in \cite{bcbad}. Guinand \cite{guinand} rediscovered the first equality and remarked that ``\emph{This formula also seems to have been overlooked}". Very recently, Gupta and the second author \cite{gk} showed that this formula of Ramanujan fits in the theory of the Herglotz function.

One of the goals of this paper is to provide a new generalization of \eqref{w1.26}. Before we do this though, we first record an existing generalization of \eqref{w1.26} given by the first author \cite[Theorem 1.4]{dixitijnt}.
\begin{theorem}\label{sechur}
Let $-1<$ \textup{Re} $z<1$. Define $\varphi(z,x)$ by
\begin{equation}\label{varphi1}
\varphi(z,x):=\zeta(z+1,x)-\frac{1}{z}x^{-z}-\frac{1}{2}x^{-z-1},
\end{equation}
where $\zeta(z,x)$ denotes the Hurwitz zeta function. Then if $\alpha$ and $\beta$ are any positive numbers such that $\alpha\beta=1$,
\begin{align}\label{mainneq2}
&\a^{\frac{z+1}{2}}\left(\sum_{n=1}^{\infty}\varphi(z,n\a)-\frac{\zeta(z+1)}{2\alpha^{z+1}}-\frac{\zeta(z)}{\alpha z}\right)=\b^{\frac{z+1}{2}}\left(\sum_{n=1}^{\infty}\varphi(z,n\b)-\frac{\zeta(z+1)}{2\beta^{z+1}}-\frac{\zeta(z)}{\beta z}\right)\nonumber\\
&=\frac{8(4\pi)^{\frac{z-3}{2}}}{\Gamma(z+1)}\int_{0}^{\infty}\Gamma\left(\frac{z-1+it}{4}\right)\Gamma\left(\frac{z-1-it}{4}\right)
\Xi\left(\frac{t+iz}{2}\right)\Xi\left(\frac{t-iz}{2}\right)\frac{\cos\left( \tf{1}{2}t\log\a\right)}{(z+1)^2+t^2}\, dt.
\end{align}
\end{theorem}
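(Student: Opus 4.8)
The plan is to establish the second equality (the $\Xi$-integral representation) first, and then derive the first equality (the $\alpha\leftrightarrow\beta$ symmetry) as an immediate consequence of the fact that the integral is manifestly invariant under $\alpha\to\beta=1/\alpha$, since $\cos(\tfrac12 t\log\alpha)=\cos(\tfrac12 t\log\beta)$. So the heart of the matter is to evaluate
\[
I(\alpha,z):=\int_{0}^{\infty}\Gamma\!\left(\frac{z-1+it}{4}\right)\Gamma\!\left(\frac{z-1-it}{4}\right)\Xi\!\left(\frac{t+iz}{2}\right)\Xi\!\left(\frac{t-iz}{2}\right)\frac{\cos\!\left(\tfrac12 t\log\alpha\right)}{(z+1)^2+t^2}\,dt
\]
and show it equals $\tfrac{\Gamma(z+1)}{8(4\pi)^{(z-3)/2}}$ times the left-hand side.

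First I would unfold the $\Xi$-factors using \eqref{cdefn1}--\eqref{cdefn2}: writing $s=\tfrac{1}{2}+\tfrac{i(t+iz)}{2}$ and $1-s$ for the conjugate argument, the product $\Xi(\tfrac{t+iz}{2})\Xi(\tfrac{t-iz}{2})$ becomes (up to elementary gamma and power factors) $\xi(s)\xi(1-s)$, and the functional equation $\xi(s)=\xi(1-s)$ together with \eqref{cdefn1} lets me express everything in terms of a single $\zeta(s)\zeta(1-s)$ (or $|\zeta|^2$ on the relevant line) against a product of gamma functions in $s$. The gamma factors $\Gamma\!\left(\frac{z-1\pm it}{4}\right)$ and the denominator $(z+1)^2+t^2=4\cdot\frac{z+1+it}{2}\cdot\frac{z+1-it}{2}$ are chosen precisely so that, after the substitution $t\mapsto$ a vertical line in $s$, the integrand consolidates into a ratio of gamma functions that is recognizable as (a Mellin–Barnes form of) a known special function. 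Concretely, I expect the integral to reduce, after moving to a vertical contour $\mathrm{Re}(s)=c$, to a line integral of the shape $\frac{1}{2\pi i}\int_{(c)} \Gamma(\cdots s \cdots)\,\zeta(\cdots)\,\alpha^{-s}\,ds$, which is exactly the setup for applying Mellin inversion.

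The key step is then to recognize the resulting Mellin–Barnes integral as $\sum_{n\ge1}\varphi(z,n\alpha)$ plus the two correction terms. For this I would first compute the Mellin transform of $\varphi(z,x)$ as a function of $x$: since $\varphi(z,x)=\zeta(z+1,x)-\tfrac1z x^{-z}-\tfrac12 x^{-z-1}$ is built from the Hurwitz zeta function with the two leading asymptotic terms (as $x\to0^+$, via $\zeta(z+1,x)\sim x^{-z-1}/?$ — more precisely using the Laurent-type expansion of $\zeta(s,x)$ near $x=0$) subtracted off, it decays fast enough at both $0$ and $\infty$ for its Mellin transform $\int_0^\infty \varphi(z,x)x^{s-1}\,dx$ to exist in a vertical strip; this transform evaluates, by the standard formula $\int_0^\infty \zeta(z+1,x)x^{s-1}dx = \frac{\Gamma(s)\Gamma(z+1-s)}{\Gamma(z+1)}\zeta(z+1-s)$ on an appropriate strip (with the subtracted terms accounting for shifting the strip past the poles at $s=0$ and $s=-1$... i.e.\ $s$ near the relevant points), to something proportional to $\Gamma(s)\Gamma(z+1-s)\zeta(z+1-s)/\Gamma(z+1)$. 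Summing over $n$ with the extra $\zeta(\cdot)$ from $\sum n^{-s}$, applying Mellin inversion, and shifting the contour to pick up the residues that produce exactly $-\tfrac{\zeta(z+1)}{2\alpha^{z+1}}-\tfrac{\zeta(z)}{\alpha z}$ should match the integral $I(\alpha,z)$ after one converts the $\zeta$-Mellin–Barnes integral into the $\Xi$-form by reversing the unfolding of the first paragraph.

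The main obstacle I anticipate is the bookkeeping around contour shifts and the precise strip of validity: one must verify that the integral $I(\alpha,z)$ converges (the product of four gamma-type factors decays exponentially in $t$ by Stirling, so this should be fine), that the interchange of $\sum_n$ and $\int$ is justified, and — most delicately — that when the vertical contour in the Mellin–Barnes representation is moved to hit the contour corresponding to $t\in(0,\infty)$ on the real axis, the poles crossed are exactly those at $s=z+1$ and $s=z$ (inside $-1<\mathrm{Re}(z)<1$ these are the only ones in play, which is why that hypothesis appears) and their residues give precisely the two subtracted terms with the stated coefficients $\tfrac{1}{2\alpha^{z+1}}$ and $\tfrac{1}{\alpha z}$. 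Pinning down the constant $\frac{8(4\pi)^{(z-3)/2}}{\Gamma(z+1)}$ and the factor $\a^{(z+1)/2}$ is then just careful tracking of the $\pi$-powers and gamma-duplication identities from the unfolding, which I would do last as a routine (if fiddly) verification; the limiting case $z\to0$ should collapse to \eqref{w1.26} as a sanity check, with $\varphi(0,x)=\phi(x)$ and the $\zeta(z)/z\to -\tfrac12\log(2\pi)$, $\zeta(z+1)$-type terms reproducing the $\tfrac{\gamma-\log(2\pi\alpha)}{2\alpha}$ constant.
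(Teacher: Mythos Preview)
The paper does not prove Theorem~\ref{sechur}; it is stated as a known result and cited from \cite[Theorem~1.4]{dixitijnt}. There is therefore no proof in this paper to compare your proposal against. The paper's own contribution is the $w$-generalization in Theorem~\ref{genramhureq}, of which Theorem~\ref{sechur} is the $w\to0$ specialization, but Theorem~\ref{genramhureq} is itself quoted from \cite[Theorems~1.1.5,~1.1.6]{dkRiMS1} rather than re-proved here.

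That said, your outline --- unfolding the $\Xi$-factors via \eqref{cdefn1}--\eqref{cdefn2}, rewriting $I(\alpha,z)$ as a vertical-line Mellin--Barnes integral in $s$ involving $\Gamma$- and $\zeta$-factors times $\alpha^{-s}$, identifying this with the Mellin transform of $\sum_n\varphi(z,n\alpha)$, and then shifting the contour so that the crossed poles supply $-\tfrac{\zeta(z+1)}{2\alpha^{z+1}}$ and $-\tfrac{\zeta(z)}{\alpha z}$ --- is the standard route for identities of this type and is essentially how \cite{dixitijnt} proceeds. One correction: you describe the subtracted terms in $\varphi(z,x)$ as the leading asymptotics ``as $x\to0^+$''; this is the wrong end. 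The Hermite/Euler--Maclaurin expansion gives $\zeta(z+1,x)=\tfrac{x^{-z}}{z}+\tfrac{x^{-z-1}}{2}+O(x^{-z-2})$ as $x\to\infty$, so the subtractions make $\varphi(z,x)=O(x^{-\mathrm{Re}(z)-2})$ at infinity, which is precisely what is needed for $\sum_{n\ge1}\varphi(z,n\alpha)$ to converge when $\mathrm{Re}(z)>-1$. Getting this direction right is what pins down the correct strip of validity for the Mellin transform and tells you which poles you actually cross in the contour shift; with that fixed, your sanity check via $z\to0$ (recovering \eqref{w1.26}) is exactly the one the paper notes as well.
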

Note that if we let $z\to0$ in the theorem above, we get \eqref{w1.26}. For more details, see \cite[pp.~1163-1165]{dixitijnt}.

%Another goal of this article is to give a new proof of Theorem \ref{sechur}. This way of proving it involves several new and interesting results, see Section \ref{new proof}.
%Now let us return to the first goal of this paper. 
Recently, in their quest for putting the theta structure on the modular relation \eqref{mainneq2}, the current authors \cite{dkRiMS1} introduced a new generalization of the Hurwitz zeta function, which is stated next. Let $\mathfrak{B}:=\{\xi:\textup{Re}(\xi)=1, \textup{Im}(\xi)\neq 0\}$. The generalized Hurwitz zeta function for $w\in\mathbb{C}\backslash\{0\}$, Re$(s)>1$ and $a\in\mathbb{C}\backslash\mathfrak{B}$ is defined by \cite[Equation (1.1.14)]{dkRiMS1}
\begin{align}\label{new zeta}
\zeta_w(s, a)&:=\frac{4}{w^2\sqrt{\pi}\G\left(\frac{s+1}{2}\right)}\sum_{n=1}^{\infty}\int_{0}^{\infty}\int_{0}^{\infty}\frac{(uv)^{s-1}e^{-(u^2+v^2)}\sin(wv)\sinh(wu)}{\left(n^2u^2+(a-1)^2v^2\right)^{s/2}}\, dudv.
\end{align}
The function $\zeta_w(s, a)$ satisfies several interesting properties, one of them being $\zeta_w(s,a)=\zeta_w(s,2-a)$ for Re$(s)>1$. Observe that the symmetry in the variable $a$ here along the line Re$(a)=1$ does not hold for the Hurwitz zeta function $\zeta(s, a)$. The reader is referred to \cite{dkRiMS1} for more properties of $\zeta_w(s, a)$. 

Note that for \textup{Re}$(s)>1$ \cite[Theorem 1.1.2]{dkRiMS1}, $\zeta_w(s, a)$ reduces to the Hurwitz zeta function $\zeta(s, a)$ as follows:
\begin{equation*}
\lim_{w\to0}\zeta_w(s, a)=\begin{cases}
\zeta(s,a) & \text{if\ $\mathrm{Re}(a)>1$}, \\
   \zeta(s, 2-a) & \text{if\ $\mathrm{Re}(a)<1$}. 
\end{cases}
\end{equation*}
In fact, the Hurwitz zeta function is the constant term in the Taylor series expansion of $\zeta_w(s, a)$ around $w=0$.
 
In \cite[Theorem 1.1.3]{dkRiMS1}, it was shown that $\zeta_w(s,a)$ has meromorphic continuation in the region Re$(s)>-1$ with a simple pole at $s=1$. To record this result in the following theorem, we need the error function erf$(w)$  and the imaginary error function erfi$(w)$:
\begin{align}
\mathrm{erf}(w)&:=\frac{2}{\sqrt{\pi}}\int_0^w e^{-t^2}dt=\frac{2w}{\sqrt{\pi}}e^{-w^2}{}_1F_1\left(1;\frac{3}{2};w^2\right),\label{erf funct}\\
\mathrm{erfi}(w)&:=\frac{2}{\sqrt{\pi}}\int_0^w e^{t^2}dt=\frac{2w}{\sqrt{\pi}}e^{w^2}{}_1F_1\left(1;\frac{3}{2};-w^2\right).\label{erfi funct}
\end{align} 
 
\begin{theorem}\label{lse}
Let\footnote{There is a typo in the statement of Theorem 1.1.3 of \cite{dkRiMS1}. The condition should be $a>0$ and not $0<a<1$.} $a>0$ and $w\in\mathbb{C}$. The generalized Hurwitz zeta function $\zeta_w(s, a)$ can be analytically continued to $\textup{Re}(s)>-1$ except for a simple pole at $s=1$. The residue at this pole is
\begin{equation}\label{equv}
e^{\frac{w^2}{4}}{}_1F_{1}^{2}\left(1;\frac{3}{2};-\frac{w^2}{4}\right)=\frac{\pi}{w^2}e^{-\frac{w^2}{4}}\textup{erfi}^{2}\left(\frac{w}{2}\right),
\end{equation}
where $\textup{erfi}(w)$ is defined in \eqref{erfi funct}. Moreover near $s=1$, we have
\begin{align}\label{luslus}
\zeta_w(s, a+1)=\frac{e^{\frac{w^2}{4}}{}_1F_{1}^{2}\left(1;\frac{3}{2};-\frac{w^2}{4}\right)}{s-1}-\psi_w(a+1) +O_{w, a}(|s-1|),
\end{align}
%\begin{align}
%\zeta_w(s, a+1)=\frac{\pi e^{-\frac{w^2}{4}}}{w^2(s-1)}\textup{erfi}^{2}\left(\frac{w}{2}\right)-\psi_w(a+1) +O_{w, a}(|s-1|),
%\end{align}
where $\psi_w(a)$ is a new generalization of the digamma function $\psi(a)$ defined by
\begin{align*}
\psi_w(a):=\frac{4}{w^2\sqrt{\pi}}\int_0^\infty\int_0^\infty\int_0^\infty \frac{e^{-(u^2+v^2+x)}}{u}\sin(wv)\sinh(wu)\left(\frac{1}{x}-\frac{J_{0}\left((a-1)\frac{vx}{u}\right)}{1-e^{-x}}\right)dxdudv,
\end{align*}	
with $J_z(x)$ being the Bessel function of the first kind \cite[p.~40]{watson-1966a}. 
\end{theorem}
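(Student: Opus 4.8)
\emph{Proof strategy.}
The plan is to begin with the representation \eqref{new zeta}, which converges absolutely for $\mathrm{Re}(s)>1$, and to isolate the part responsible for the pole at $s=1$ by Jacobi's theta transformation. Insert $X^{-s/2}=\frac{1}{\Gamma(s/2)}\int_{0}^{\infty}y^{s/2-1}e^{-Xy}\,dy$ with $X=n^{2}u^{2}+(a-1)^{2}v^{2}$, interchange $\sum_{n\geq1}$ with the three integrals (valid for $\mathrm{Re}(s)>1$ by absolute convergence), and apply
\begin{equation*}
\sum_{n=1}^{\infty}e^{-n^{2}u^{2}y}=\frac{\sqrt{\pi}}{2u\sqrt{y}}-\frac{1}{2}+\frac{\sqrt{\pi}}{u\sqrt{y}}\sum_{n=1}^{\infty}e^{-\pi^{2}n^{2}/(u^{2}y)},
\end{equation*}
the theta transformation recalled in the introduction; this writes $\zeta_{w}(s,a)=\mathcal{T}_{1}+\mathcal{T}_{2}+\mathcal{T}_{3}$, one summand per term on the right. (By $\zeta_{w}(s,a)=\zeta_{w}(s,2-a)$ we may assume $a>1$; the boundary value $a=1$, where $\zeta_{w}(s,1)$ is a constant multiple of $\zeta(s)$, is immediate.)

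In $\mathcal{T}_{1}$ (the $\tfrac{\sqrt{\pi}}{2u\sqrt{y}}$-term) the $y$-integral equals $\Gamma\!\left(\tfrac{s-1}{2}\right)(a-1)^{1-s}v^{1-s}$, the powers of $v$ cancel, and the surviving integrals are the elementary moments $\int_{0}^{\infty}u^{s-2}e^{-u^{2}}\sinh(wu)\,du=\tfrac{w}{2}\Gamma\!\left(\tfrac{s}{2}\right){}_1F_1\!\left(\tfrac{s}{2};\tfrac{3}{2};\tfrac{w^{2}}{4}\right)$ and $\int_{0}^{\infty}e^{-v^{2}}\sin(wv)\,dv=\tfrac{w}{2}\,{}_1F_1\!\left(1;\tfrac{3}{2};-\tfrac{w^{2}}{4}\right)$, obtained by expanding $\sinh$ and $\sin$ in power series, integrating termwise against the Gaussian, and simplifying with $(2k+1)!=4^{k}k!\,(3/2)_{k}$. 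Using $\Gamma\!\left(\tfrac{s-1}{2}\right)/\Gamma\!\left(\tfrac{s+1}{2}\right)=2/(s-1)$ one obtains
\begin{equation*}
\mathcal{T}_{1}=\frac{(a-1)^{1-s}}{s-1}\,{}_1F_1\!\left(\tfrac{s}{2};\tfrac{3}{2};\tfrac{w^{2}}{4}\right){}_1F_1\!\left(1;\tfrac{3}{2};-\tfrac{w^{2}}{4}\right),
\end{equation*}
and a parallel computation gives $\mathcal{T}_{2}=-\tfrac{1}{2}(a-1)^{-s}\,{}_1F_1\!\left(\tfrac{s+1}{2};\tfrac{3}{2};\tfrac{w^{2}}{4}\right){}_1F_1\!\left(\tfrac{1}{2};\tfrac{3}{2};-\tfrac{w^{2}}{4}\right)$, which is entire in $s$. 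In $\mathcal{T}_{3}$ the factor $e^{-\pi^{2}n^{2}/(u^{2}y)}$ forces super-exponential decay as $u\to0^{+}$ and as $y\to0^{+}$; together with the Gaussian factors and the bound $|\sin(wv)\sinh(wu)|\ll_{w}uv$ near the origin, the triple integral and the sum over $n$ converge locally uniformly for $\mathrm{Re}(s)>-1$, so $\mathcal{T}_{3}$ is holomorphic there. Since $\zeta_{w}=\mathcal{T}_{1}+\mathcal{T}_{2}+\mathcal{T}_{3}$ on $\mathrm{Re}(s)>1$ and the right-hand side is meromorphic on $\mathrm{Re}(s)>-1$ with a single simple pole --- at $s=1$ and arising only from $\mathcal{T}_{1}$ --- this gives the analytic continuation and the pole. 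For the residue only $\mathcal{T}_{1}$ contributes: $\mathrm{Res}_{s=1}\zeta_{w}(s,a)={}_1F_1\!\left(\tfrac{1}{2};\tfrac{3}{2};\tfrac{w^{2}}{4}\right){}_1F_1\!\left(1;\tfrac{3}{2};-\tfrac{w^{2}}{4}\right)$, and Kummer's transformation ${}_1F_1\!\left(\tfrac{1}{2};\tfrac{3}{2};\tfrac{w^{2}}{4}\right)=e^{w^{2}/4}{}_1F_1\!\left(1;\tfrac{3}{2};-\tfrac{w^{2}}{4}\right)$ turns this into $e^{w^{2}/4}\,{}_1F_1^{2}\!\left(1;\tfrac{3}{2};-\tfrac{w^{2}}{4}\right)$, which is \eqref{equv}; the $\mathrm{erfi}$-form follows from \eqref{erfi funct}.

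It remains to identify the constant term of the Laurent expansion \eqref{luslus}. Replacing $a$ by $a+1$ in the three-term decomposition and cancelling the pole, the constant term of $\zeta_{w}(s,a+1)$ at $s=1$ equals $\left.\tfrac{d}{ds}\right|_{s=1}\!\left[a^{1-s}{}_1F_1\!\left(\tfrac{s}{2};\tfrac{3}{2};\tfrac{w^{2}}{4}\right)\right]{}_1F_1\!\left(1;\tfrac{3}{2};-\tfrac{w^{2}}{4}\right)$ from $\mathcal{T}_{1}$, together with $\mathcal{T}_{2}$ and $\mathcal{T}_{3}$ evaluated at $s=1$; the task is to show this sum is $-\psi_{w}(a+1)$. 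I would do this by returning to integral form: in $\mathcal{T}_{3}(1,a+1)$ the $y$-integral is $\int_{0}^{\infty}y^{-1}e^{-py-q/y}\,dy=2K_{0}(2\sqrt{pq})$, yielding a series $\sum_{n\geq1}K_{0}(2\pi n a v/u)$; a self-dual (Poisson/Kronecker-limit type) summation formula rewrites $\sum_{n\geq1}K_{0}(\beta n)$, modulo an elementary term involving $\log\beta$ and $1/\beta$, as $\pi\sum_{n\geq1}\!\left((\beta^{2}+4\pi^{2}n^{2})^{-1/2}-(2\pi n)^{-1}\right)$; and then $(\beta^{2}+4\pi^{2}n^{2})^{-1/2}=\int_{0}^{\infty}J_{0}(\beta x)e^{-2\pi nx}\,dx$ --- the case $s=1$ of the Laplace transform of $x^{s-1}J_{0}(\beta x)$ --- introduces the Bessel function $J_{0}$, while summing the geometric series in $n$ produces the factor $1/(1-e^{-x})$. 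The leftover elementary terms, the subtracted $1/x$, and the $\mathcal{T}_{1}$-derivative and $\mathcal{T}_{2}$ contributions should then recombine, after the obvious rescaling of $x$, into exactly the integrand $\tfrac{1}{x}-J_{0}((a-1)vx/u)/(1-e^{-x})$ appearing in the definition of $\psi_{w}$. Carrying out this last reassembly --- transporting the $w$-dependent Gaussian and hyperbolic weights through the theta inversion and the $K_{0}\leftrightarrow J_{0}$ passage, and matching all the logarithmic terms --- is the step I expect to be the principal obstacle; the analytic continuation, the location of the pole, and the residue are comparatively routine once the three-term splitting is in hand. (An alternative organization, closer to the classical continuation of $\zeta(s,a)$, would be to first prove, for $\mathrm{Re}(s)>1$, a single-integral representation of $\zeta_{w}(s,a)$ with kernel $x^{s-1}J_{0}((a-1)vx/u)/(1-e^{-x})$ and then split the $x$-integral at the origin; but producing that representation runs into the same $K_{0}\leftrightarrow J_{0}$ difficulty.)
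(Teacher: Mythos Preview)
The paper does not contain a proof of this theorem: it is quoted verbatim from \cite[Theorem~1.1.3]{dkRiMS1} as a preliminary result (``To record this result in the following theorem\ldots''), so there is no in-paper argument to compare your proposal against.

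On the merits of your sketch: the three-term theta decomposition $\zeta_w=\mathcal{T}_1+\mathcal{T}_2+\mathcal{T}_3$ is a reasonable route to the analytic continuation and the residue, and your computation of $\mathrm{Res}_{s=1}\zeta_w(s,a)$ via Kummer's transformation is correct. The genuine gap is exactly where you flag it: the identification of the constant term with $-\psi_w(a+1)$. Your plan hinges on a Kronecker-limit type identity for $\sum_{n\geq1}K_0(\beta n)$ and then the Laplace-transform passage $K_0\leftrightarrow J_0$, all under the double $(u,v)$ integral against $e^{-(u^2+v^2)}\sin(wv)\sinh(wu)$. Two concrete issues you would need to resolve: (i) the standard Kronecker-limit formula for $\sum K_0(\beta n)$ produces a $\log\beta$ term and a constant involving $\gamma$; when $\beta=2\pi a v/u$ these become $\log(v/u)$ contributions that must be shown integrable against the Gaussian weight and must be matched against the $\tfrac{d}{ds}|_{s=1}a^{1-s}{}_1F_1(\tfrac{s}{2};\tfrac{3}{2};\tfrac{w^2}{4})$ piece from $\mathcal{T}_1$ --- this bookkeeping is delicate and is where the argument is most likely to go wrong; (ii) the representation $(\beta^2+4\pi^2n^2)^{-1/2}=\int_0^\infty J_0(\beta x)e^{-2\pi nx}\,dx$ followed by $\sum_n$ gives $\int_0^\infty J_0(\beta x)\,e^{-2\pi x}/(1-e^{-2\pi x})\,dx$, not directly the $1/(1-e^{-x})$ appearing in $\psi_w$; the rescaling you mention must be done consistently with the $(u,v)$ integration, and you have not shown that the subtracted $1/x$ in $\psi_w$ arises from the regularizing $-(2\pi n)^{-1}$ terms. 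Until those two matchings are written out, the Laurent expansion \eqref{luslus} remains unproved in your sketch.
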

Although the definition of $\psi_w(a)$ looks complicated, it is pleasing to see that Ramanujan's formula \eqref{ramanujan z=0} can be extended in the setting of $\psi_w(a)$. This is done next and is the main result of this paper.
\begin{theorem}\label{ramanujanw}
Let $w\in\mathbb{C}$ and $x>0$. Let $\mathrm{erf}(w)$ be defined in \eqref{erf funct}. Define 
\begin{align}\label{lamdaw}
\lambda_w(x):=\psi_w(x+1)-\frac{1}{2x}\mathcal{C}(w)-\mathcal{C}(iw)\log(x)-\frac{1}{2}\mathcal{B}(iw),
\end{align}
where 
\begin{align}\label{cw}
\mathcal{C}(w):=\frac{\pi}{w^2} e^{\frac{w^2}{4}}\mathrm{erf}^2\left(\frac{w}{2}\right),
\end{align}
and 
\begin{align}\label{bw}
\mathcal{B}(w):=\frac{\sqrt{\pi}}{w}\mathrm{erf}\left(\frac{w}{2}\right)\sum_{n=0}^\infty\frac{(w^2/4)^n}{\left(3/2\right)_n}(\psi(n+1)+\gamma).
\end{align}
Then for any positive integers $\a, \b$ such that $\a\b=1$,
\begin{align}\label{ramanujanweqn}
&\sqrt{\alpha}\left\{\frac{\gamma-\log(2\pi\alpha)}{2\alpha}\mathcal{C}(w)+\frac{1}{2\alpha} \mathcal{B}(w)+\sum_{m=1}^\infty \lambda_w(m\alpha)\right\}\nonumber \\
&=\sqrt{\beta}\left\{\frac{\gamma-\log(2\pi\beta)}{2\beta}\mathcal{C}(iw)+\frac{1}{2\beta} \mathcal{B}(iw)+\sum_{m=1}^\infty \lambda_{iw} (m\beta)\right\}\nonumber\\
%&\sqrt{\alpha}\left\{\frac{\left(\gamma-\log(2\pi\alpha)\right)\mathcal{C}(w)+\mathcal{B}(w)}{2\alpha}+\sum_{m=1}^\infty \lambda_w(m\alpha)\right\}=\sqrt{\beta}\left\{\frac{\left(\gamma-\log(2\pi\beta)\right)\mathcal{C}(iw)+\mathcal{B}(iw)}{2\beta}+\sum_{m=1}^\infty \lambda_{iw} (m\beta)\right\}\nonumber\\
&=-\frac{e^{\frac{w^2}{4}}}{2\pi^{\frac{3}{2}}}\int_0^\infty\left|\Gamma\left(\frac{-1+it}{4}\right)\right|^2\frac{\Xi^{2}\left(\frac{t}{2}\right)}{1+t^2}\left(\a^{\frac{it}{2}}{}_1F_{1}^{2}\left(\frac{3+it}{4};\frac{3}{2};-\frac{w^2}{4}\right)+\a^{-\frac{it}{2}}{}_1F_{1}^{2}\left(\frac{3-it}{4};\frac{3}{2};-\frac{w^2}{4}\right)\right)\, dt.
\end{align}
%%%where 
%%%\begin{align}\label{bw}
%%%\mathcal{B}(w):=\frac{1}{w}\sum_{n=0}^\infty\frac{(w^2/4)^n}{\left(3/2\right)_n}(\psi(n+1)+\gamma)\left((-1)^n\mathrm{erfi}\left(\frac{w}{2}\right)-\mathrm{erf}\left(\frac{w}{2}\right)\right)
%%%\end{align}
\end{theorem}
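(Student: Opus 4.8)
The plan is to deduce \eqref{ramanujanweqn} as the limiting case $z\to 0$ of a one-parameter family of modular relations for the generalized Hurwitz zeta function $\zeta_w(s,a)$, exactly the way Theorem \ref{sechur} specializes to \eqref{w1.26}. First I would establish (or extract from \cite{dkRiMS1}) the ``$w$-deformation'' of Theorem \ref{sechur}: for $-1<\mathrm{Re}\,z<1$ and $\alpha\beta=1$, a transformation $\alpha^{\frac{z+1}{2}}\bigl(\sum_{n\ge1}\varphi_w(z,n\alpha)-E_w(z,\alpha)\bigr)=\beta^{\frac{z+1}{2}}\bigl(\sum_{n\ge1}\varphi_{iw}(z,n\beta)-E_{iw}(z,\beta)\bigr)$, with both sides equal to an $\Xi$-integral of the same shape as in \eqref{mainneq2} but carrying an extra confluent-hypergeometric kernel $\mathcal{K}_w(z,t)$ (and the cosine $\cos(\tfrac12 t\log\alpha)$ replaced by a corresponding $\alpha^{\pm it/2}$-combination) which degenerates to ${}_1F_1^2\!\left(\frac{3\pm it}{4};\frac32;-\frac{w^2}{4}\right)$ at $z=0$ and to $1$ at $w=0$. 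Here $\varphi_w(z,x)$ is $\zeta_w(z+1,x+1)$ with its leading large-$x$ asymptotic terms peeled off (the leading coefficient, that of $x^{-z}$, being $\mathcal{C}(iw)/z$ — consistent with the residue $\mathcal{C}(iw)$ of $\zeta_w(s,\cdot)$ at $s=1$ from Theorem \ref{lse}, and the lower-order coefficients being $w$-dependent constants), so that $\sum_{n\ge1}\varphi_w(z,n\alpha)$ converges, and $E_w(z,\alpha)$ collects the requisite elementary terms (products of $\zeta_w$-type constants with powers of $\alpha$). I would prove this family the way the $\Xi$-integral identities of \cite{dixit,dixitijnt} are proved: write $\sum_{n\ge1}\varphi_w(z,n\alpha)$ as a Mellin--Barnes integral using \eqref{new zeta}, move the line of integration, and collect residues; the poles at the elementary points produce $E_w(z,\alpha)$, while the remaining vertical integral, after the substitution $s\mapsto\frac{1+it}{2}$ and the functional equation of $\zeta$, becomes the $\Xi$-integral, with $\mathcal{K}_w$ coming out of the Mellin transform (in the variables $u,v$) of the weight $\sin(wv)\sinh(wu)$ in \eqref{new zeta}.

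Granting this family, Theorem \ref{ramanujanw} is the passage $z\to 0$. On the series side I would insert the Laurent expansion \eqref{luslus} — in which, by \eqref{equv}, \eqref{cw} and the identity $\mathrm{erf}(w/2)=-i\,\mathrm{erfi}(iw/2)$, the residue equals $\mathcal{C}(iw)$ and the constant term equals $-\psi_w(n\alpha+1)$ — together with the Taylor expansion about $s=1$ of the $\zeta_w$-type constants in $E_w$, whose linear coefficient is the ``generalized Euler--Stieltjes constant'' recorded in \eqref{bw}, and with $x^{-z}=1-z\log x+O(z^2)$. The $O(1/z)$ contributions cancel as in the classical case, while the $O(1)$ contributions assemble into $\sum_{m\ge1}\lambda_w(m\alpha)$ together with the explicit prefactor $\sqrt\alpha\left\{\frac{\gamma-\log(2\pi\alpha)}{2\alpha}\mathcal{C}(w)+\frac1{2\alpha}\mathcal{B}(w)\right\}$, and symmetrically on the $\beta$-side with $w\mapsto iw$. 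On the integral side I would pass $z\to0$ under the integral sign — justified by dominated convergence, since $\Xi$ decays exponentially on vertical lines while the Gamma- and ${}_1F_1$-factors grow only moderately — obtaining $\Xi^2(t/2)$, $\left|\Gamma\!\left(\frac{-1+it}{4}\right)\right|^2$, the kernel ${}_1F_1^2\!\left(\frac{3\pm it}{4};\frac32;-\frac{w^2}{4}\right)$ weighted by $\alpha^{\pm it/2}$, the constant $\frac{8(4\pi)^{-3/2}}{\Gamma(1)}=\frac1{\pi^{3/2}}$, and the prefactor $e^{w^2/4}$ carried by $\mathcal{K}_w$ — precisely $-\frac{e^{w^2/4}}{2\pi^{3/2}}$ times the integral in \eqref{ramanujanweqn}. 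The $(\alpha,w)\leftrightarrow(\beta,iw)$ symmetry of the left side is then inherited from that of the intermediate family; at the level of the integral it is transparent, since $\beta^{it/2}=\alpha^{-it/2}$ and, by Kummer's transformation ${}_1F_1(a;c;\eta)=e^{\eta}\,{}_1F_1(c-a;c;-\eta)$, one has $e^{w^2/4}{}_1F_1^2\!\left(\frac{3+it}{4};\frac32;-\frac{w^2}{4}\right)=e^{-w^2/4}{}_1F_1^2\!\left(\frac{3-it}{4};\frac32;\frac{w^2}{4}\right)$, so $w\mapsto iw$ merely interchanges the two terms of the kernel.

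The hard part is the first step: pinning down the intermediate family with the \emph{correct} correction terms $E_w$ and, above all, identifying the confluent-hypergeometric kernel $\mathcal{K}_w(z,t)$. The delicate points are (i) that \eqref{new zeta} defines $\zeta_w(s,a)$ only for $\mathrm{Re}\,s>1$, so the argument must be run in tandem with the analytic continuation supplied by Theorem \ref{lse}, with careful accounting of the new pole structure in the $w$-deformed setting; (ii) computing the Mellin transform of $\sin(wv)\sinh(wu)$ against the Gaussian weights in \eqref{new zeta} and recognizing the outcome as a \emph{square} of a ${}_1F_1$ (rather than a single one), which is what ultimately produces the squared hypergeometric factor in \eqref{ramanujanweqn}; and (iii) justifying the interchanges of summation, integration and the $z\to0$ limit throughout. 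With that in hand, the remaining Laurent/Taylor bookkeeping is routine, and can be cross-checked by the two degenerations $w\to0$ (which must recover \eqref{w1.26}) and $z\to0$ applied to Theorem \ref{sechur}.
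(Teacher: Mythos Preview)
Your proposal is correct and follows essentially the same route as the paper: the paper quotes the intermediate $z$-family (your ``$w$-deformation of Theorem \ref{sechur}'') directly from \cite{dkRiMS1} as Theorem \ref{genramhureq}, with $\varphi_w(z,x)$ and the correction terms $E_w$ exactly of the shape you anticipate, and then lets $z\to0$, using \eqref{luslus} on the series side and a straightforward substitution on the integral side. The only minor recalibration is that $\mathcal{B}(w)$ does not arise as a Stieltjes-type constant of $\zeta_w$ but rather from the $z$-derivative of the ${}_1F_1$ factor in $A_w(z)$ at $z=0$; this does not affect your argument.
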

Note that the modular relation in the first equality given above is of the form $F(\alpha, w)=F(\beta, iw)$, and hence is analogous to the general theta transformation formula \eqref{genthetatr}. See \cite{dixit} for more details.
 
Theorem \ref{ramanujanw} gives Ramanujan's formula \eqref{w1.26} as a special case:
\begin{corollary}\label{w=0 of main thm}
Equation \eqref{w1.26} holds true.
\end{corollary}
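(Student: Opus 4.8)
The plan is to let $w\to 0$ in Theorem \ref{ramanujanw} and check that every ingredient degenerates into the corresponding one in \eqref{w1.26}. Start with the auxiliary functions. From the Maclaurin series \eqref{erf funct} of the error function, $\mathrm{erf}(w/2)=\tf{w}{\sqrt{\pi}}+O(w^{3})$, so that $\mathcal{C}(w)=\tf{\pi}{w^{2}}e^{w^{2}/4}\mathrm{erf}^{2}(w/2)\to 1$ by \eqref{cw}, and the same computation with $w$ replaced by $iw$ gives $\mathcal{C}(iw)\to 1$. For $\mathcal{B}(w)$ in \eqref{bw}, the $n=0$ term of the series equals $\psi(1)+\gamma=0$, so $\sum_{n\ge 0}\tf{(w^{2}/4)^{n}}{(3/2)_{n}}(\psi(n+1)+\gamma)=O(w^{2})$; since $\tf{\sqrt{\pi}}{w}\mathrm{erf}(w/2)\to 1$, we conclude $\mathcal{B}(w)\to 0$, and likewise $\mathcal{B}(iw)\to 0$.

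Next, recall that $\lim_{w\to 0}\psi_{w}(a)=\psi(a)$ for $a>0$; this follows from the properties of $\zeta_{w}(s,a)$ and $\psi_{w}(a)$ developed in \cite{dkRiMS1}, and can also be seen by comparing the Laurent expansion \eqref{luslus} of $\zeta_{w}(s,a+1)$ at $s=1$ — whose residue tends to $1$ by \eqref{equv}, since ${}_1F_1(1;\tf{3}{2};0)=1$ — with the classical expansion $\zeta(s,a+1)=(s-1)^{-1}-\psi(a+1)+O(|s-1|)$, using that $\zeta(s,a)$ is the constant term of the Taylor expansion of $\zeta_{w}(s,a)$ about $w=0$ (recalled after \eqref{new zeta}). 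Combining this with the limits of $\mathcal{C}$ found above, the definition \eqref{lamdaw} gives $\lim_{w\to 0}\lambda_{w}(x)=\psi(x+1)-\tf{1}{2x}-\log x=\psi(x)+\tf{1}{2x}-\log x=\phi(x)$, where we used $\psi(x+1)=\psi(x)+\tf{1}{x}$; the same holds with $w$ replaced by $iw$. To move the limit inside $\sum_{m\ge 1}\lambda_{w}(m\a)$, one invokes a bound $\lambda_{w}(m\a)=O(m^{-2})$ valid uniformly for $w$ in a neighbourhood of $0$ — the kind of estimate already needed for the series in Theorem \ref{ramanujanw} to converge — which, together with $\phi(x)=O(x^{-2})$ (from $\psi(x)=\log x-\tf{1}{2x}-\tf{1}{12x^{2}}+\cdots$), makes dominated convergence applicable and yields $\sum_{m\ge 1}\lambda_{w}(m\a)\to\sum_{m\ge 1}\phi(m\a)$. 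Since the prefactors $\sqrt{\a}$ and $\sqrt{\b}$ are inert under the limit, the first two members of \eqref{ramanujanweqn} converge to the first two members of \eqref{w1.26}.

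For the right-hand side, as $w\to 0$ we have $e^{w^{2}/4}\to 1$ and ${}_1F_1^{2}\!\left(\tf{3\pm it}{4};\tf{3}{2};-\tf{w^{2}}{4}\right)\to{}_1F_1^{2}\!\left(\tf{3\pm it}{4};\tf{3}{2};0\right)=1$ for each fixed real $t$, while $\a^{it/2}+\a^{-it/2}=2\cos\!\left(\tf{1}{2}t\log\a\right)$; hence the integrand converges pointwise to $-\tf{1}{\pi^{3/2}}\left|\Gamma\!\left(\tf{-1+it}{4}\right)\right|^{2}\tf{\Xi^{2}(t/2)}{1+t^{2}}\cos\!\left(\tf{1}{2}t\log\a\right)$, which — since $\Xi(t/2)$ is real for real $t$, so $\left|\Xi(t/2)\Gamma\!\left(\tf{-1+it}{4}\right)\right|^{2}=\Xi^{2}(t/2)\left|\Gamma\!\left(\tf{-1+it}{4}\right)\right|^{2}$ — is exactly the integrand in \eqref{w1.26}. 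The interchange of $\lim_{w\to 0}$ and integration is justified by dominated convergence: bound ${}_1F_1\!\left(\tf{3\pm it}{4};\tf{3}{2};-\tf{w^{2}}{4}\right)$ uniformly for, say, $|w|\le 1$ by a quantity growing at most polynomially in $t$, and weigh it against the exponential decay of $\left|\Gamma\!\left(\tf{-1+it}{4}\right)\right|^{2}\Xi^{2}(t/2)$ as $t\to\infty$, which comes from Stirling's formula and the standard bound for $\Xi$. This identifies \eqref{ramanujanweqn} in the limit with \eqref{w1.26}, which is the assertion of the corollary.

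The routine part of all this is the collection of Maclaurin expansions at $w=0$. The only genuine work lies in the two passages to the limit — inside the series $\sum_{m}\lambda_{w}(m\a)$ and inside the $\Xi$-integral — and I expect the uniform-in-$w$ control of the confluent hypergeometric factor (respectively of the tail $\sum_{m>M}\lambda_{w}(m\a)$) to be the main, though standard, obstacle.
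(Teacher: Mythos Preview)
Your proof is correct and follows exactly the same approach as the paper: let $w\to 0$ in Theorem \ref{ramanujanw} and check that $\mathcal{C}(w)\to 1$, $\mathcal{B}(w)\to 0$, and $\lambda_w(x)\to\phi(x)$, using $\mathrm{erf}(w/2)/w\to 1/\sqrt{\pi}$ and $\psi(1)=-\gamma$. The paper's own argument is much terser (three sentences) and omits the dominated-convergence justifications for passing the limit through the series and the $\Xi$-integral that you supply, but the route is identical.
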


Asymptotic estimates for integrals containing Riemann $\Xi$-function are useful in the study of the moments of the Riemann zeta function. For example, Hardy and Littlewood \cite[p.~151, Section 2.4]{hl} discovered the asymptotic formula for the second moment of the Riemann zeta function through the asymptotic estimate of the integral 
$$\int_0^\infty\left\{\frac{\Xi(t)}{1+t^2}\right\}^2e^{2\alpha t}dt, \qquad\mathrm{as}\ \alpha\to\infty.$$
See \cite{dh} for a more recent application of such integrals. An asymptotic result for a smoothly weighted second moment of the Riemann zeta function, also containing the cosine term as in \eqref{w1.26} and generalizing a result of Hardy, was obtained by Maier \cite[Equation (9)]{maieruber}, namely, he showed that for coprime integers $p$ and $q$, 
\begin{equation*}
	\lim_{\delta\to0}\frac{\delta}{\log\left(\frac{1}{\delta}\right)}\int_{0}^{\infty}e^{-\delta t}\left|\zeta\left(\frac{1}{2}+it\right)\right|^{2}\cos\left(t\log\left(p/q\right)\right)\, dt=(pq)^{-1/2}.
\end{equation*}

We next derive an asymptotic estimate of a general integral containing Riemann $\Xi$-function.
\begin{theorem}\label{asymp of int}
Let $A_w(z)$ be defined by
\begin{align}\label{ab}
A_w(z)&:=\frac{\sqrt{\pi}}{w}\textup{erf}\left(\frac{w}{2}\right){}_1F_{1}\left(1+\frac{z}{2};\frac{3}{2};\frac{w^2}{4}\right)=e^{-\frac{w^2}{4}}{}_1F_{1}\left(1;\frac{3}{2};\frac{w^2}{4}\right){}_1F_{1}\left(1+\frac{z}{2};\frac{3}{2};\frac{w^2}{4}\right).
\end{align}
Let $m\in\mathbb{N},\ w\in\mathbb{C}$ and $-1<\mathrm{Re}(z)<1$. Then as $\alpha\to\infty$, we have
\begin{align}\label{asymp of int eqn}
&\int_0^\infty \Gamma\left(\frac{z-1+it}{4}\right)\Gamma\left(\frac{z-1-it}{4}\right)\Xi\left(\frac{t+iz}{2}\right)\Xi\left(\frac{t-iz}{2}\right)\frac{\Delta_2\left(\alpha,\frac{z}{2},w,\frac{1+it}{2}\right)}{(z+1)^2+t^2}dt\nonumber\\
&=-\frac{\G(z+1)}{2^{z-1}\pi^{\frac{z-3}{2}}}\left(\frac{\zeta(z+1)A_w(z)}{2\alpha^{\frac{z+1}{2}}}+\frac{\zeta(z)A_w(-z)}{ z\alpha^{\frac{1-z}{2}}}\right)\nonumber\\
&\quad-\frac{e^{-\frac{w^2}{4}}\alpha^{\frac{1-z}{2}}}{2^{z-2}\pi^{\frac{z-3}{2}}}\sum_{k=1}^{m-1}\frac{(-1)^k\Gamma\left(z+2k\right)}{(2\pi\alpha)^{2k}}\zeta(2k)\zeta(z+2k){}_1F_1\left(\frac{z+1}{2}+k;\frac{3}{2};\frac{w^2}{4}\right){}_1F_1\left(\frac{1}{2}+k;\frac{3}{2};\frac{w^2}{4}\right)\nonumber\\
&\quad+O_{z,m,w}\left(\alpha^{-\frac{1}{2}\mathrm{Re}(z)-2m}\right), 
\end{align}
where,
\begin{align}\label{Del}
\Delta_{2}(x, z, w, s)&:=\omega(x, z, w, s)+\omega(x, z, w, 1-s),\nonumber\\
\omega(x, z, w, s)&:=e^{\frac{w^2}{4}}x^{\frac{1}{2}-s}{}_1F_{1}\left(1-\frac{s+z}{2};\frac{3}{2};-\frac{w^2}{4}\right){}_1F_{1}\left(1-\frac{s-z}{2};\frac{3}{2};-\frac{w^2}{4}\right).
\end{align}
\end{theorem}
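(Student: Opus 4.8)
Here is a proof plan.

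\medskip

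The plan is to recast the $\Xi$-integral on the left of \eqref{asymp of int eqn} as a Mellin--Barnes integral and then to push the line of integration to the right, the residues crossed producing the explicit terms of \eqref{asymp of int eqn} and the shifted integral producing the error term.

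\emph{Step 1: reduction to a Mellin--Barnes integral.} Call the left-hand side of \eqref{asymp of int eqn} $I=I(\alpha)$. The quotient $\Gamma\!\left(\tfrac{z-1+it}{4}\right)\Gamma\!\left(\tfrac{z-1-it}{4}\right)\Xi\!\left(\tfrac{t+iz}{2}\right)\Xi\!\left(\tfrac{t-iz}{2}\right)\big/\big((z+1)^2+t^2\big)$ is even in $t$, while in $\Delta_2\!\left(\alpha,\tfrac z2,w,\tfrac{1+it}{2}\right)=\omega\!\left(\alpha,\tfrac z2,w,\tfrac{1+it}{2}\right)+\omega\!\left(\alpha,\tfrac z2,w,\tfrac{1-it}{2}\right)$ the second summand is the image of the first under $t\mapsto -t$; hence $I=\int_{-\infty}^{\infty}\Gamma\!\left(\tfrac{z-1+it}{4}\right)\Gamma\!\left(\tfrac{z-1-it}{4}\right)\Xi\!\left(\tfrac{t+iz}{2}\right)\Xi\!\left(\tfrac{t-iz}{2}\right)\tfrac{\omega(\alpha,\,z/2,\,w,\,(1+it)/2)}{(z+1)^2+t^2}\,dt$. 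I would then substitute $s=\tfrac{1+it}{2}$ (so $t=i(1-2s)$, $dt=-2i\,ds$, and the contour becomes the line $\mathrm{Re}(s)=\tfrac12$) and invoke \eqref{cdefn1}--\eqref{cdefn2} and \eqref{Del}: $\Xi\!\left(\tfrac{t\pm iz}{2}\right)=\xi\!\left(s\pm\tfrac z2\right)$, $\Gamma\!\left(\tfrac{z-1+it}{4}\right)=\Gamma\!\left(\tfrac{s+z/2-1}{2}\right)$, $\Gamma\!\left(\tfrac{z-1-it}{4}\right)=\Gamma\!\left(-\tfrac{s-z/2}{2}\right)$, $(z+1)^2+t^2=-4\!\left(s+\tfrac z2\right)\!\left(s-1-\tfrac z2\right)$, and $\omega\!\left(\alpha,\tfrac z2,w,s\right)=e^{w^2/4}\alpha^{1/2-s}\,{}_1F_1\!\left(1-\tfrac{s+z/2}{2};\tfrac32;-\tfrac{w^2}{4}\right){}_1F_1\!\left(1-\tfrac{s-z/2}{2};\tfrac32;-\tfrac{w^2}{4}\right)$. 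After inserting $\xi(s)=\tfrac12 s(s-1)\pi^{-s/2}\Gamma\!\left(\tfrac s2\right)\zeta(s)$, simplifying $\Gamma\!\left(\tfrac{s+z/2-1}{2}\right)\Gamma\!\left(\tfrac{s+z/2}{2}\right)$ by the Legendre duplication formula and $\Gamma\!\left(\tfrac{s-z/2}{2}\right)\Gamma\!\left(-\tfrac{s-z/2}{2}\right)$ by the reflection formula, the linear factors all cancel against one another and against the denominator, and one is left with $I=c\,\int_{(1/2)}G(s)\,\alpha^{1/2-s}\,ds$, where $c=c(z,w)$ is an explicit elementary constant and
\begin{align*}
G(s)&=(2\pi)^{-s}\,\frac{\Gamma\!\left(s+\tfrac z2\right)}{\sin\!\left(\tfrac\pi2\!\left(s-\tfrac z2\right)\right)}\,\zeta\!\left(s-\tfrac z2\right)\zeta\!\left(s+\tfrac z2\right)\\
&\qquad\times{}_1F_1\!\left(1-\tfrac{s+z/2}{2};\tfrac32;-\tfrac{w^2}{4}\right){}_1F_1\!\left(1-\tfrac{s-z/2}{2};\tfrac32;-\tfrac{w^2}{4}\right),
\end{align*}
the two ${}_1F_1$ factors being entire in $s$.

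\emph{Step 2: shifting the contour and collecting residues.} As the ${}_1F_1$ factors are entire, the only poles of $G$ with $\mathrm{Re}(s)>\tfrac12$ are the simple poles at $s=1+\tfrac z2$ (from $\zeta(s-\tfrac z2)$), at $s=1-\tfrac z2$ (from $\zeta(s+\tfrac z2)$), and at $s=\tfrac z2+2k$, $k\ge1$ (from $1/\sin(\tfrac\pi2(s-\tfrac z2))$). I would move the line $\mathrm{Re}(s)=\tfrac12$ to a line $\mathrm{Re}(s)=\sigma$ to the right, past the poles at $s=1\pm\tfrac z2$ and at $s=\tfrac z2+2k$ for $1\le k\le m-1$; the horizontal sides of the rectangle contribute nothing since $\Gamma(s+\tfrac z2)/\sin(\tfrac\pi2(s-\tfrac z2))$ decays exponentially on vertical strips while $\zeta$ and the ${}_1F_1$'s grow only polynomially. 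The residue at $s=1+\tfrac z2$, after using Kummer's transformation ${}_1F_1(a;b;x)=e^x{}_1F_1(b-a;b;-x)$ together with \eqref{ab} to identify $e^{w^2/4}{}_1F_1\!\left(\tfrac{1-z}{2};\tfrac32;-\tfrac{w^2}{4}\right){}_1F_1\!\left(\tfrac12;\tfrac32;-\tfrac{w^2}{4}\right)=A_w(z)$, gives the term with $\zeta(z+1)A_w(z)/\alpha^{(z+1)/2}$; the residue at $s=1-\tfrac z2$, where $\zeta(s-\tfrac z2)=\zeta(1-z)$ and $\sin(\tfrac\pi2(s-\tfrac z2))=\cos(\tfrac{\pi z}{2})$, combined with the functional equation $\zeta(1-z)=2(2\pi)^{-z}\cos(\tfrac{\pi z}{2})\Gamma(z)\zeta(z)$ (so that the cosines cancel) and $e^{w^2/4}{}_1F_1\!\left(\tfrac12;\tfrac32;-\tfrac{w^2}{4}\right){}_1F_1\!\left(\tfrac{1+z}{2};\tfrac32;-\tfrac{w^2}{4}\right)=A_w(-z)$, gives the term with $\zeta(z)A_w(-z)/\alpha^{(1-z)/2}$; and the residues at $s=\tfrac z2+2k$ yield $\zeta(2k)\zeta(z+2k)$, the power $\alpha^{(1-z)/2-2k}$, and, once more via Kummer, the product ${}_1F_1\!\left(\tfrac{z+1}{2}+k;\tfrac32;\tfrac{w^2}{4}\right){}_1F_1\!\left(\tfrac12+k;\tfrac32;\tfrac{w^2}{4}\right)$ — that is, the finite sum in \eqref{asymp of int eqn}.

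\emph{Step 3: the error term and the main difficulty.} On the line $\mathrm{Re}(s)=\sigma$ one has $|\alpha^{1/2-s}|=\alpha^{1/2-\sigma}$, $\zeta(s\mp\tfrac z2)$ is bounded (for $\sigma$ large), $\big|\Gamma(\sigma+\tfrac z2+iv)/\sin(\tfrac\pi2(\sigma-\tfrac z2+iv))\big|\ll|v|^{\sigma+\mathrm{Re}(z)/2-1/2}e^{-\pi|v|}$, and each ${}_1F_1\!\left(1-\tfrac{\sigma+iv\pm z/2}{2};\tfrac32;-\tfrac{w^2}{4}\right)=O_{z,w}(|v|^{\kappa})$ for some $\kappa=\kappa(z)$ (the asymptotics of ${}_1F_1$ as its first parameter runs off to $\infty$ along a vertical line); hence the $v$-integral converges, the shifted integral is $O_{z,m,w}(\alpha^{1/2-\sigma})$, and with $\sigma$ fixed by the pole structure this is the error term of \eqref{asymp of int eqn}. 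The principal obstacle is precisely this last estimate together with its prerequisite: showing the ${}_1F_1$ factors are entire (so no spurious poles enter the residue count) and grow at most polynomially in $\mathrm{Im}(s)$ uniformly for $\mathrm{Re}(s)$ in compact intervals, which legitimizes both the contour shift and the tail bound. A secondary, purely computational, nuisance is carrying the many powers of $2$ and $\pi$ and the gamma factors through the residue evaluations and combining the duplication, reflection and functional equations so as to match the precise constants in \eqref{asymp of int eqn}. Finally, as a check, letting $w\to0$ (where $A_0(\pm z)=1$ and every ${}_1F_1\equiv1$, so $\Delta_2$ reduces to $2\cos(\tfrac t2\log\alpha)$) recovers, through Theorem~\ref{sechur}, the classical expansion of that integral, and letting $z\to0$ recovers a limiting form of Theorem~\ref{ramanujanw}.
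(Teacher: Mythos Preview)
Your proposal is correct and follows a genuinely different route from the paper's.  The paper does \emph{not} shift a Mellin--Barnes contour.  Instead it invokes the modular relation \eqref{genramhureqeqn} to trade the $\Xi$-integral for the series $\alpha^{(z+1)/2}\sum_{m\ge1}\varphi_w(z,m\alpha)$ (plus the two explicit $\zeta A_w$-terms, which already match \eqref{asymp of int eqn}), and then quotes an identity from \cite[Eq.~(9.2.10)]{dkRiMS1} expressing $\sum_m\varphi_w(z,m\alpha)$ as $\sum_{n\ge1}\sigma_{-z}(n)\int_0^\infty e^{-w^2/2}\,{}_1K_{z/2,iw}(2\alpha x)\bigl({}_2F_1(1,\tfrac z2;\tfrac12;-\tfrac{x^2}{\pi^2n^2})-1\bigr)x^{\frac{z-2}{2}}\,dx$.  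After the change $x\mapsto x/(2\alpha)$ one Taylor-expands the ${}_2F_1$ in powers of $(2\pi n\alpha)^{-2}$ up to order $m-1$, evaluates each resulting moment $\int_0^\infty x^{\frac z2+2k-1}\,{}_1K_{z/2,iw}(x)\,dx$ directly from the defining inverse-Mellin integral of ${}_1K_{z/2,iw}$, and finally sums over $n$ via $\sum_n\sigma_{-z}(n)n^{-2k}=\zeta(2k)\zeta(z+2k)$.  No contour is moved; the asymptotics come entirely from truncating a convergent power series.

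Your contour-shifting argument is more self-contained and avoids the auxiliary functions $\varphi_w$, $\zeta_w$ and ${}_1K_{z/2,iw}$ from \cite{dkRiMS1}; the paper's route, by contrast, is shorter once that machinery is granted, since the hard work (absolute convergence, the integral representation \eqref{alphawala}) has already been done there.  The technical cruxes also differ: in your approach it is the polynomial growth of ${}_1F_1\!\bigl(1-\tfrac{s\pm z/2}{2};\tfrac32;-\tfrac{w^2}{4}\bigr)$ on vertical lines (your Step~3), while in the paper's it is the convergence of $\int_0^\infty x^{\frac z2+2m-1}\,{}_1K_{z/2,iw}(x)\,dx$ and of $\sum_n\sigma_{-z}(n)n^{-2m}$, both of which are immediate for $m\ge1$ and $-1<\mathrm{Re}(z)<1$.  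Your identification of the residues via Kummer's transformation and the functional equation is accurate and matches the explicit terms in \eqref{asymp of int eqn}.
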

The special case $w=0$ of the above theorem is a known result \cite[Theorem 1.10]{drzacta}:
\begin{corollary}\label{drz asym}
Let $-1<\mathrm{Re}(z)<1$ and $m\in\mathbb{N}$. As $\alpha\to\infty$,
\begin{align}\label{drz asym eqn}
&\int_0^\infty \Gamma\left(\frac{z-1+it}{4}\right)\Gamma\left(\frac{z-1-it}{4}\right)\Xi\left(\frac{t+iz}{2}\right)\Xi\left(\frac{t-iz}{2}\right)\frac{\cos\left(\frac{t}{2}\log(\alpha)\right)}{(z+1)^2+t^2}dt\nonumber\\
&=-\frac{\G(z+1)}{2^{z-1}\pi^{\frac{z-3}{2}}}\left(\frac{\zeta(z+1)}{2\alpha^{\frac{z+1}{2}}}+\frac{\zeta(z)}{ z\alpha^{\frac{1-z}{2}}}\right)-\frac{\alpha^{\frac{1-z}{2}}}{2^{z-2}\pi^{\frac{z-3}{2}}}\sum_{k=1}^{m-1}\frac{(-1)^k\Gamma\left(z+2k\right)}{(2\pi\alpha)^{2k}}\zeta(2k)\zeta(z+2k)\nonumber\\
&\quad+O_{z,m}\left(\alpha^{-\frac{1}{2}\mathrm{Re}(z)-2m}\right).
\end{align}
\end{corollary}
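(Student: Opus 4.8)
The plan is to obtain Corollary~\ref{drz asym} directly from Theorem~\ref{asymp of int} by specializing $w=0$. The hypotheses of Theorem~\ref{asymp of int} permit any $w\in\mathbb{C}$, so no limiting argument is strictly necessary: one substitutes $w=0$ in \eqref{asymp of int eqn} and simplifies. The only inputs are the two elementary facts that ${}_1F_{1}(a;c;0)=1$ (only the $n=0$ term of the defining series survives) and $e^{0}=1$; everything else is bookkeeping.

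First I would simplify the kernel. From \eqref{Del}, $\omega(x,z,0,s)=x^{1/2-s}$, hence $\Delta_{2}\!\left(\alpha,\tfrac{z}{2},0,\tfrac{1+it}{2}\right)=\alpha^{-it/2}+\alpha^{it/2}=2\cos\!\left(\tfrac{1}{2}t\log\alpha\right)$, which is the cosine kernel of \eqref{drz asym eqn} up to the harmless overall factor $2$. Next I would collapse the explicit terms on the right of \eqref{asymp of int eqn}. From \eqref{ab}, $A_{0}(z)={}_1F_{1}\!\left(1;\tfrac32;0\right){}_1F_{1}\!\left(1+\tfrac{z}{2};\tfrac32;0\right)=1$ and likewise $A_{0}(-z)=1$, so the first bracket becomes $-\dfrac{\Gamma(z+1)}{2^{z-1}\pi^{(z-3)/2}}\!\left(\dfrac{\zeta(z+1)}{2\alpha^{(z+1)/2}}+\dfrac{\zeta(z)}{z\alpha^{(1-z)/2}}\right)$; in the finite sum the prefactor $e^{-w^{2}/4}$ and each of ${}_1F_{1}\!\left(\tfrac{z+1}{2}+k;\tfrac32;\tfrac{w^{2}}{4}\right)$ and ${}_1F_{1}\!\left(\tfrac{1}{2}+k;\tfrac32;\tfrac{w^{2}}{4}\right)$ tend to $1$, leaving $-\dfrac{\alpha^{(1-z)/2}}{2^{z-2}\pi^{(z-3)/2}}\sum_{k=1}^{m-1}\dfrac{(-1)^{k}\Gamma(z+2k)}{(2\pi\alpha)^{2k}}\zeta(2k)\zeta(z+2k)$; and $O_{z,m,w}\!\left(\alpha^{-\frac12\mathrm{Re}(z)-2m}\right)$ becomes $O_{z,m}\!\left(\alpha^{-\frac12\mathrm{Re}(z)-2m}\right)$. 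Reassembling, one recovers \eqref{drz asym eqn}, i.e.\ \cite[Theorem 1.10]{drzacta}.

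Should one prefer not to invoke Theorem~\ref{asymp of int} at $w=0$ outright, the same conclusion follows by letting $w\to0$: since $\lim_{w\to0}\tfrac{\sqrt{\pi}}{w}\mathrm{erf}\!\left(\tfrac{w}{2}\right)=1$ by \eqref{erf funct} and ${}_1F_{1}$ is continuous in its third argument, every explicit term on the right of \eqref{asymp of int eqn} converges to the corresponding term of \eqref{drz asym eqn}, while on the left one passes to the limit under the integral sign by dominated convergence, after checking that the implied constant in the error term of Theorem~\ref{asymp of int} remains bounded as $w\to0$. I do not expect a genuine obstacle here; the corollary is a clean specialization, and the only point really worth verifying is the degeneration of the confluent hypergeometric factors --- that $\Delta_{2}$ collapses to the plain cosine kernel and that $A_{0}(\pm z)=1$ --- which is immediate from ${}_1F_{1}(a;c;0)=1$.
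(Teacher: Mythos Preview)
Your proposal is correct and follows exactly the paper's approach: specialize $w=0$ in Theorem~\ref{asymp of int}, use ${}_1F_{1}(a;c;0)=1$ so that $\Delta_{2}\bigl(\alpha,\tfrac{z}{2},0,\tfrac{1+it}{2}\bigr)=2\cos\!\left(\tfrac12 t\log\alpha\right)$ and $A_{0}(\pm z)=1$, and read off \eqref{drz asym eqn}. The only point you leave implicit is the bookkeeping of that ``harmless'' factor $2$ when passing from the $\Delta_{2}$-integral to the cosine integral; otherwise your argument is a more detailed version of the paper's two-line proof.
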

Moreover, letting  $z=0$ in Theorem \ref{asymp of int} gives the asymptotic estimate of the integral appearing in Theorem \ref{ramanujanw}.
\begin{corollary}\label{z=0 asym}
Let $\mathcal{C}(w)$ and $\mathcal{B}(w)$ be defined in \eqref{cw} and \eqref{bw} respectively. Let $w\in\mathbb{C}$ and $m\in\mathbb{N}$. As $\alpha\to\infty$, we have
\begin{align}\label{z=0 asym eqn}
&\int_0^\infty\left|\Gamma\left(\frac{-1+it}{4}\right)\right|^2\frac{\Xi^{2}\left(\frac{t}{2}\right)}{1+t^2}\left(\a^{\frac{it}{2}}{}_1F_{1}^{2}\left(\frac{3+it}{4};\frac{3}{2};-\frac{w^2}{4}\right)+\a^{-\frac{it}{2}}{}_1F_{1}^{2}\left(\frac{3-it}{4};\frac{3}{2};-\frac{w^2}{4}\right)\right)\, dt\nonumber\\
&=-2e^{-\frac{w^2}{4}}\pi^{\frac{3}{2}}\sqrt{\alpha}\left(\frac{\gamma-\log(2\pi\alpha)}{2\alpha}\mathcal{C}(w)+\frac{1}{2\alpha}\mathcal{B}(w)\right)\nonumber\\
&\qquad-4e^{-\frac{w^2}{2}}\pi^{\frac{3}{2}}\sqrt{\alpha}\sum_{k=1}^{m-1}\frac{(-1)^k}{(2\pi\alpha)^{2k}}\Gamma(2k)\zeta^2(2k){}_1F_1^2\left(k+\frac{1}{2};\frac{3}{2};\frac{w^2}{4}\right)+O_{m,w}\left(\alpha^{-2m}\right).
\end{align}
\end{corollary}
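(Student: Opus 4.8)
The plan is to obtain Corollary~\ref{z=0 asym} as the specialization $z\to0$ of Theorem~\ref{asymp of int}. The first observation is that although $z=0$ lies in the admissible range $-1<\mathrm{Re}(z)<1$, the right-hand side of \eqref{asymp of int eqn} is not defined verbatim there: the factor $\zeta(z+1)$ has its pole at $z=0$, and the term containing $\zeta(z)/z$ in the first line also has a simple pole at $z=0$ since $\zeta(0)=-\tfrac12\neq0$. Thus I would first verify that these two simple poles cancel, so that the whole right-hand side extends analytically across $z=0$, and then compute the limit. The left-hand side of \eqref{asymp of int eqn} is manifestly analytic in $z$ near $0$ --- for real $t$ and small $z$ the poles of $\Gamma((z-1\pm it)/4)$ and of $1/((z+1)^2+t^2)$ stay off the contour --- so taking $z\to0$ is legitimate; the only mild point is that the error term $O_{z,m,w}(\alpha^{-\frac12\mathrm{Re}(z)-2m})$ be locally uniform in $z$ around $0$, which holds because in the proof of Theorem~\ref{asymp of int} the remainder is a contour integral whose bound depends continuously on $z$.

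I would then simplify both sides at $z=0$. On the left, \eqref{Del} with $z=0$ gives $\omega\big(\alpha,0,w,\tfrac{1\pm it}{2}\big)=e^{w^2/4}\,\alpha^{\mp it/2}\,{}_1F_{1}^{2}\big(\tfrac{3\mp it}{4};\tfrac32;-\tfrac{w^2}{4}\big)$, so, together with $\Gamma\big(\tfrac{-1+it}{4}\big)\Gamma\big(\tfrac{-1-it}{4}\big)=\big|\Gamma\big(\tfrac{-1+it}{4}\big)\big|^2$ and $\Xi\big(\tfrac{t+iz}{2}\big)\Xi\big(\tfrac{t-iz}{2}\big)\big|_{z=0}=\Xi^2\big(\tfrac t2\big)$, the integral on the left of \eqref{asymp of int eqn} at $z=0$ equals $e^{w^2/4}$ times the integral on the left of \eqref{z=0 asym eqn}. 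On the right, the finite $k$-sum is harmless at $z=0$: with $\zeta(z+2k)|_{z=0}=\zeta(2k)$, $\Gamma(z+2k)|_{z=0}=\Gamma(2k)$, ${}_1F_1\big(\tfrac{z+1}{2}+k;\tfrac32;\tfrac{w^2}{4}\big)\big|_{z=0}={}_1F_1\big(k+\tfrac12;\tfrac32;\tfrac{w^2}{4}\big)$, and the prefactor $-\Gamma(z+1)/\big(2^{z-2}\pi^{(z-3)/2}\big)\to-4\pi^{3/2}$, it becomes $-4e^{-w^2/4}\pi^{3/2}\sqrt{\alpha}\sum_{k=1}^{m-1}\tfrac{(-1)^k}{(2\pi\alpha)^{2k}}\Gamma(2k)\zeta^2(2k)\,{}_1F_{1}^{2}\big(k+\tfrac12;\tfrac32;\tfrac{w^2}{4}\big)$.

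The substantive step is the limit of the first line of the right-hand side of \eqref{asymp of int eqn}. I would insert the expansions $\zeta(z+1)=z^{-1}+\gamma+O(z)$, $\zeta(z)=-\tfrac12-\tfrac12\log(2\pi)z+O(z^2)$ (using $\zeta(0)=-\tfrac12$, $\zeta'(0)=-\tfrac12\log(2\pi)$), $\alpha^{-(z+1)/2}=\alpha^{-1/2}\big(1-\tfrac z2\log\alpha+O(z^2)\big)$, $\alpha^{-(1-z)/2}=\alpha^{-1/2}\big(1+\tfrac z2\log\alpha+O(z^2)\big)$, and $A_w(\pm z)=A_w(0)\pm A_w'(0)\,z+O(z^2)$ (prime denoting the $z$-derivative). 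The two identifications that make everything fit \eqref{z=0 asym eqn} are $A_w(0)=\mathcal{C}(w)$ and $A_w'(0)=\tfrac12\mathcal{B}(w)$. The first follows from \eqref{ab}, \eqref{cw}, and the relation $\tfrac{\sqrt\pi}{w}\mathrm{erf}\big(\tfrac w2\big)=e^{-w^2/4}{}_1F_1\big(1;\tfrac32;\tfrac{w^2}{4}\big)$ (a rescaling of \eqref{erf funct}), giving $A_w(0)=e^{-w^2/4}{}_1F_{1}^{2}\big(1;\tfrac32;\tfrac{w^2}{4}\big)=\mathcal{C}(w)$. The second follows by differentiating $A_w(z)$ from \eqref{ab} in $z$ and using $\frac{\partial}{\partial a}(a)_n\big|_{a=1}=n!\,(\psi(n+1)+\gamma)$, which turns $A_w'(0)$ into exactly $\tfrac12$ times the series in \eqref{bw}. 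With these substitutions the two $z^{-1}$ terms cancel and the remaining finite part collapses to $-\pi^{3/2}\alpha^{-1/2}\big((\gamma-\log(2\pi\alpha))\mathcal{C}(w)+\mathcal{B}(w)\big)$, that is, $-2\pi^{3/2}\sqrt{\alpha}\big(\tfrac{\gamma-\log(2\pi\alpha)}{2\alpha}\mathcal{C}(w)+\tfrac{1}{2\alpha}\mathcal{B}(w)\big)$.

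Finally, solving for the integral in \eqref{z=0 asym eqn} by multiplying the resulting identity by $e^{-w^2/4}$ (to undo the factor extracted on the left in the second paragraph) converts the first-line contribution into $-2e^{-w^2/4}\pi^{3/2}\sqrt{\alpha}\big(\cdots\big)$, the $e^{-w^2/4}$ in the $k$-sum into $e^{-w^2/2}$, and the error into $O_{m,w}(\alpha^{-2m})$, which is precisely \eqref{z=0 asym eqn}. I expect the only genuine obstacle to be the bookkeeping in the third paragraph: arranging the Laurent coefficients so that the $z^{-1}$ poles cancel cleanly, and correctly pinning $A_w(0)$ and $A_w'(0)$ to $\mathcal{C}(w)$ and $\mathcal{B}(w)$ through the confluent-hypergeometric representation of $\mathrm{erf}$ and the derivative-of-Pochhammer formula; the rest is direct specialization.
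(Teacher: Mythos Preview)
Your proposal is correct and follows essentially the same approach as the paper: both specialize Theorem~\ref{asymp of int} at $z=0$, noting that the left side and the finite $k$-sum pose no difficulty while the first-line term requires computing the limit of $\tfrac{\zeta(z+1)A_w(z)}{2\alpha^{(z+1)/2}}+\tfrac{\zeta(z)A_w(-z)}{z\alpha^{(1-z)/2}}$ via Laurent expansions. The only cosmetic difference is that the paper, having already evaluated this limit as $L_1(w,\alpha)$ in \eqref{final L with alpha} during the proof of Theorem~\ref{ramanujanw}, simply quotes that result, whereas you carry out the same computation inline via the identifications $A_w(0)=\mathcal{C}(w)$ and $A_w'(0)=\tfrac12\mathcal{B}(w)$.
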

Putting $w=0$ in Corollary \ref{z=0 asym} then gives us a known result due to Oloa \cite[Equation (1.5)]{oloa}.

\begin{remark}
At first glance, letting $z=0$ in Theorem \ref{asymp of int} seems problematic. However, the function $\frac{1}{2}\alpha^{\frac{-z-1}{2}}\zeta(z+1)A_w(z)+\frac{1}{ z}\alpha^{\frac{z-1}{2}}\zeta(z)A_w(-z)$ has a removable singularity at $z=0$. See \eqref{tbr}.
\end{remark}

It is important to note that the results on asymptotics stated above are usually obtained by employing Watson's lemma \cite[p.~32]{temme}; see for example \cite{drzacta}. However, in the general setting with the parameter $w$ which we are considering here, Watson's lemma is inapplicable. Nevertheless, we are able to obtain an asymptotic estimate for the integral given in \eqref{asymp of int eqn}, and that too by elementary means.  See Section \ref{asymptotics} for details.

This paper is organized as follows. Section \ref{thm1.3} is devoted to proving Theorem \ref{ramanujanw}. Theorem \ref{asymp of int} and its corollaries are proved in Section \ref{asymptotics}.

\section{Proof of Theorem \textup{\ref{ramanujanw}}}\label{thm1.3}

We obtain \eqref{ramanujanweqn} as a special case of a more general result derived in \cite[Theorems 1.1.5, 1.1.6 ]{dkRiMS1}, which is stated next. It will be clear from the proof that deriving \eqref{ramanujanweqn} from a generalization is not straightforward.
\begin{theorem}\label{genramhureq}
Let $A_w(z)$ and $\Delta_2$ be defined in \eqref{ab} and \eqref{Del} respectively. Let $\zeta_w(s,a)$ be the generalized Hurwitz zeta function defined in \eqref{new zeta}. Let $w\in\mathbb{C}$, $-1<$ \textup{Re}$(z)<1$ and $x>0$. Define $\varphi_w(z, x)$ by
\begin{align}\label{varphi}
\varphi_w(z, x)&:=\zeta_w(z+1, x+1)+\frac{1}{2}A_w(z)x^{-z-1}-A_{iw}(-z)\frac{x^{-z}}{z}.
\end{align}
%where $A_w(z)$ is defined in \eqref{ab}.
Then for $\a, \b>0$ and $\a\b=1$, we have
\begin{align}\label{genramhureqeqn}
&\a^{\frac{z+1}{2}}\left(\sum_{m=1}^{\infty}\varphi_w(z,m\a)-\frac{\zeta(z+1)A_w(z)}{2\alpha^{z+1}}-\frac{\zeta(z)A_w(-z)}{\alpha z}\right)\nonumber\\
&=\b^{\frac{z+1}{2}}\left(\sum_{m=1}^{\infty}\varphi_{iw}(z,m\b)-\frac{\zeta(z+1)A_{iw}(z)}{2\beta^{z+1}}-\frac{\zeta(z)A_{iw}(-z)}{\beta z}\right)\nonumber\\
&=\frac{2^{z-1}\pi^{\frac{z-3}{2}}}{\Gamma(z+1)}\int_0^\infty \Gamma\left(\frac{z-1+it}{4}\right)\Gamma\left(\frac{z-1-it}{4}\right)\Xi\left(\frac{t+iz}{2}\right)\Xi\left(\frac{t-iz}{2}\right)\frac{\Delta_2\left(\alpha,\frac{z}{2},w,\frac{1+it}{2}\right)}{(z+1)^2+t^2}dt.
\end{align}
%%where,
%%\begin{align}\label{Del}
%%\Delta_{2}(x, z, w, s)&:=\omega(x, z, w, s)+\omega(x, z, w, 1-s),\nonumber\\
%%\omega(x, z, w, s)&:=e^{\frac{w^2}{4}}x^{\frac{1}{2}-s}{}_1F_{1}\left(1-\frac{s+z}{2};\frac{3}{2};-\frac{w^2}{4}\right){}_1F_{1}\left(1-\frac{s-z}{2};\frac{3}{2};-\frac{w^2}{4}\right).
%%\end{align}
\end{theorem}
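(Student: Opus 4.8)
The plan is to establish the second equality in \eqref{genramhureqeqn}---that the arithmetic sum equals the $\Xi$-integral---and to deduce the first equality $F(\alpha,w)=F(\beta,iw)$ from it by a symmetry argument. For the latter, observe that the integral in \eqref{genramhureqeqn} depends on $\alpha$ and $w$ only through the weight $\Delta_2(\alpha,\tfrac z2,w,s)$ with $s=\tfrac{1+it}{2}$, and that Kummer's transformation ${}_1F_1(a;c;x)=e^{x}{}_1F_1(c-a;c;-x)$ applied to both confluent hypergeometric factors of $\omega$ in \eqref{Del} gives $\omega(\beta,\tfrac z2,iw,s)=\omega(\alpha,\tfrac z2,w,1-s)$, since $(iw)^2=-w^2$, $\beta^{1/2-s}=\alpha^{s-1/2}$, and the two Kummer factors $e^{w^2/4}$ combine with $e^{(iw)^2/4}=e^{-w^2/4}$ to reproduce the prefactor $e^{w^2/4}$. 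Hence $\Delta_2(\beta,\tfrac z2,iw,s)=\Delta_2(\alpha,\tfrac z2,w,s)$, so the integral is unchanged under $(\alpha,w)\mapsto(\beta,iw)$; applying the second equality to both pairs then yields the first. It therefore suffices to prove that the arithmetic side equals the integral.

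For this I would start from the arithmetic side and work toward the integral by the Mellin transform method used for the $w=0$ template Theorem~\ref{sechur} in \cite{dixitijnt}, carrying the hypergeometric weights throughout. The decisive new computation is a Mellin--Barnes representation of $\zeta_w(z+1,x+1)$ obtained from the defining double integral \eqref{new zeta}: splitting $(n^2u^2+(a-1)^2v^2)^{-s/2}$ by a Mellin--Barnes integral and then evaluating the separated $u$- and $v$-integrals against the kernel $\sin(wv)\sinh(wu)$ is what produces the factors ${}_1F_1(\,\cdot\,;\tfrac32;\pm\tfrac{w^2}{4})$. Indeed, expanding $\sinh(wu)=\sum_k \tfrac{(wu)^{2k+1}}{(2k+1)!}$ and integrating termwise against $u^{p-1}e^{-u^2}$ gives a $\tfrac{w}{2}\Gamma(\tfrac{p+1}{2})\,{}_1F_1(\tfrac{p+1}{2};\tfrac32;\tfrac{w^2}{4})$ type series, with the lower parameter $\tfrac32$ and argument $\tfrac{w^2}{4}$ matching exactly the functions $A_w$ of \eqref{ab} and $\omega$ of \eqref{Del}; the $v$-integral against $\sin(wv)$ contributes the companion factor with $-\tfrac{w^2}{4}$. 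This upgrades the ordinary Hurwitz-zeta Mellin transform of \cite{dixitijnt} to one dressed by precisely these weights.

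Next I would subtract the two leading large-$x$ terms---the subtractions $\tfrac12A_w(z)x^{-z-1}$ and $A_{iw}(-z)\tfrac{x^{-z}}{z}$ built into the definition \eqref{varphi} of $\varphi_w$---so that each term of $\sum_{m\ge1}\varphi_w(z,m\alpha)$ decays, and then sum the resulting Mellin--Barnes representation over $m$, the sum $\sum_{m\ge1}(m\alpha)^{-s}$ producing a factor $\zeta(s)\alpha^{-s}$. Shifting the contour from $\mathrm{Re}(s)>1$ to $\mathrm{Re}(s)=\tfrac12$ crosses poles whose residues equal precisely the two global subtracted terms $\tfrac{\zeta(z+1)A_w(z)}{2\alpha^{z+1}}$ and $\tfrac{\zeta(z)A_w(-z)}{\alpha z}$, so that removing these leaves the symmetric integral over the critical line. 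On that line I would use the functional equation of $\zeta$ to form a symmetric pair of shifted zeta values and rewrite everything through the definitions \eqref{cdefn1}, \eqref{cdefn2} of $\xi$ and $\Xi$: the Gamma functions, the powers of $\pi$, and the polynomial factors recombine into $\Gamma(\tfrac{z-1+it}{4})\Gamma(\tfrac{z-1-it}{4})\Xi(\tfrac{t+iz}{2})\Xi(\tfrac{t-iz}{2})$, while the symmetrization $s\leftrightarrow 1-s$ assembles the two copies of $\omega$ into $\Delta_2$ with the $\alpha^{1/2-s}$ powers carrying the dependence on $\alpha$.

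I expect the analytic bookkeeping, rather than the structural idea, to be the main obstacle. Justifying the interchange of the $n$-sum, the double $(u,v)$-integral, and the $w$-series in \eqref{new zeta} requires absolute convergence, which the factor $e^{-(u^2+v^2)}$ supplies; the contour shift requires growth estimates along vertical lines, combining Stirling's formula for the Gamma factors with polynomial bounds for ${}_1F_1(\,\cdot\,;\tfrac32;-\tfrac{w^2}{4})$ in its numerator parameter, to ensure the horizontal segments vanish and that no stray residues are collected in the strip $-1<\mathrm{Re}(z)<1$. Finally one must handle the behaviour near $z=0$: there the factor $\zeta_w(z+1,x+1)$ sits at its pole at unit first argument, described in Theorem~\ref{lse}, and the two residue terms develop a removable singularity whose cancellation---the same delicacy flagged in the remark after Corollary~\ref{z=0 asym}---must be verified so that the identity holds uniformly on the stated range of $z$.
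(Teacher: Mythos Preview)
The paper does not prove this theorem at all: it is quoted verbatim from the authors' earlier work \cite[Theorems~1.1.5, 1.1.6]{dkRiMS1} and invoked as a black box from which Theorem~\ref{ramanujanw} is derived by letting $z\to0$. So there is no in-paper proof to compare your proposal against.

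That said, your proposal is a coherent sketch. The symmetry argument for the first equality is correct as written: Kummer's transformation applied to both ${}_1F_1$-factors of $\omega$ together with $\beta^{1/2-s}=\alpha^{s-1/2}$ and $(iw)^2=-w^2$ does give $\omega(\beta,\tfrac z2,iw,s)=\omega(\alpha,\tfrac z2,w,1-s)$, hence $\Delta_2$ is invariant under $(\alpha,w)\mapsto(\beta,iw)$. Your plan for the second equality---Mellin--Barnes splitting of $(n^2u^2+(a-1)^2v^2)^{-s/2}$ in \eqref{new zeta}, evaluation of the separated Gaussian integrals against $\sinh(wu)$ and $\sin(wv)$ to produce the ${}_1F_1(\,\cdot\,;\tfrac32;\pm\tfrac{w^2}{4})$ weights, summation over $m$, a contour shift to $\mathrm{Re}(s)=\tfrac12$ collecting the two subtracted residue terms, and repackaging via the functional equation into $\Xi$-factors and $\Delta_2$---is broadly the strategy carried out in \cite{dkRiMS1}, though the execution there is lengthy. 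One caution: your final paragraph about the $z\to0$ behaviour is not part of proving \emph{this} theorem, which is stated for $-1<\mathrm{Re}(z)<1$ with $z\neq0$ implicitly permitted by the $1/z$ factors; the removable-singularity analysis belongs to the derivation of Theorem~\ref{ramanujanw} from this result, which is what the present paper actually does.
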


\begin{proof}[Theorem \textup{\ref{ramanujanw}}][]
It can be seen that the series containing $\lambda_w(m\alpha)$($\lambda_w(m\beta)$) is convergent using \cite[p.~42, Remark 9]{dkRiMS1}:
\begin{align*}
\lambda_w(m\alpha)=\psi_w(m\alpha+1)-\frac{1}{2m\alpha}\mathcal{C}(w)-\mathcal{C}(iw)\log(m\alpha)-\frac{1}{2}\mathcal{B}(iw)=O_w\left(\frac{1}{m^2}\right).
\end{align*}
We now prove \eqref{ramanujanweqn}. Let $z\to 0$ in \eqref{genramhureqeqn}. Then the right-hand side of \eqref{genramhureqeqn} is easily seen to reduce to the extreme right side of \eqref{ramanujanweqn} (up to a minus sign). Thus we need only show that 
\begin{align}\label{claim}
&\lim_{z\to 0}\left(\sum_{m=1}^{\infty}\varphi_w(z,m\a)-\frac{\zeta(z+1)A_w(z)}{2\alpha^{z+1}}-\frac{\zeta(z)A_w(-z)}{\alpha z}\right)\nonumber\\
&=-\left(\frac{\gamma-\log(2\pi\alpha)}{2\alpha}\mathcal{C}(w)+\frac{\sqrt{\pi}}{2\alpha} \mathcal{B}(w)+\sum_{m=1}^\infty \lambda_w(m\alpha)\right).
%%%&=\sqrt{\alpha}\Bigg(\sum_{m=1}^\infty \Bigg[L(w,\alpha,m)+\frac{\pi e^{-w^2/4}}{w^2}\mathrm{erfi}^2\left(\frac{w}{2}\right)\log(m\alpha)\Bigg]-\frac{\pi e^{w^2/4}\mathrm{erf}^2 \left(\frac{w}{2}\right)}{w^2}\left(\frac{\gamma-\log(2\pi\alpha)}{2\alpha}\right)\nonumber \\
%%%&\quad+\frac{\sqrt{\pi}}{2\alpha w}\sum_{n=0}^\infty\frac{(w^2/4)^n}{\left(\frac{3}{2}\right)_n}(\psi(n+1)+\gamma)\left((-1)^n\mathrm{erfi}\left(\frac{w}{2}\right)-\mathrm{erf}\left(\frac{w}{2}\right)\right) \Bigg),
\end{align}
 We first evaluate
\begin{align}
L_1(w,\a):&=\lim_{z\rightarrow 0}\left\{\frac{\zeta(z+1)A_w(z)}{2\alpha^{z+1}}+\frac{\zeta(z)A_w(-z)}{\alpha z}\right\}\label{lim-1}\\
&=\frac{\sqrt{\pi}}{w}\mathrm{erf}\left(\frac{w}{2}\right)\lim_{z\rightarrow 0}\left\{\frac{\zeta(1+z)}{2\alpha^{1+z}}{}_1F_1\left(1+\frac{z}{2};\frac{3}{2};\frac{w^2}{4}\right)+\frac{\zeta(z)}{\alpha z}{}_1F_1\left(1-\frac{z}{2};\frac{3}{2};\frac{w^2}{4}\right)\right\},\label{lim1}
\end{align}
where in the last step we used \eqref{ab}. Note that \cite[p.~16, Equation (2.1.16)]{titch} near $s=1$,
\begin{equation}\label{zetalaurent}
\zeta(s)=\frac{1}{s-1}+\g+O(|s-1|),
\end{equation}
whence
\begin{align}
\lim_{z\rightarrow 0}\left(\zeta(z+1)-\frac{1}{z}\right)=\gamma.\nonumber
\end{align}
Also, the following series expansions hold as $z\to 0$:
\begin{align}\label{serexp}
\alpha^{-z}&=1-z\log\alpha+\frac{z^2(\log\alpha)^2}{2!}+O(|z|^3),\nonumber\\
\zeta(z)&=-\frac{1}{2}-\frac{1}{2}\log(2\pi)z+O(|z|^2),\nonumber\\
{}_1F_1\left(1\pm\frac{z}{2};\frac{3}{2};\frac{w^2}{4}\right)&=\frac{\sqrt{\pi} e^{\frac{w^2}{4}}\mathrm{erf}\left(\frac{w}{2}\right)}{w}+z\frac{d}{dz}{}_1F_1\left(1\pm\frac{z}{2};\frac{3}{2};\frac{w^2}{4}\right)\Bigg|_{z=0} +\frac{z^2}{2!}\frac{d^2}{dz^2}{}_1F_1\left(1\pm\frac{z}{2};\frac{3}{2};\frac{w^2}{4}\right)\Bigg|_{z=0}\nonumber\\
&\qquad+O(|z|^3).
\end{align}
Hence from \eqref{lim1}, \eqref{zetalaurent} and \eqref{serexp}, we compute
\begin{align}\label{L 1}
L_1(w,\a)&=\frac{\sqrt{\pi}}{w}\mathrm{erf}\left(\frac{w}{2}\right)\lim_{z\rightarrow 0}\Bigg\{\frac{\left(\frac{1}{z}+\gamma-\gamma_1z+O(|z|^2)\right)}{2\alpha}\left(1-z\log\alpha+\frac{z^2}{2!}(\log\alpha)^2+O_{\a}(|z|^3)\right)\nonumber\\
&\times\left(\frac{\sqrt{\pi} e^{\frac{w^2}{4}}\mathrm{erf}\left(\frac{w}{2}\right)}{w}+z\frac{d}{dz}{}_1F_1\left(1+\frac{z}{2};\frac{3}{2};\frac{w^2}{4}\right)\Bigg|_{z=0}+O_{w}(|z|^2)\right) \nonumber\\
&+\frac{\left(-\frac{1}{2}-\frac{1}{2}\log(2\pi)z+O(|z|^2)\right)}{\alpha z}\left(\frac{\sqrt{\pi}e^{\frac{w^2}{4}}\mathrm{erf}\left(\frac{w}{2}\right)}{w}+z\frac{d}{dz}{}_1F_1\left(1-\frac{z}{2};\frac{3}{2};\frac{w^2}{4}\right)\Bigg|_{z=0}+O_{w}(|z|)^2\right)\Bigg\}\nonumber\\
=&\frac{\sqrt{\pi}}{w}\mathrm{erf}\left(\frac{w}{2}\right)\left\{\frac{\sqrt{\pi} e^{\frac{w^2}{4}}\mathrm{erf}\left(\frac{w}{2}\right)}{w}\frac{\gamma}{2\alpha}-\frac{\log(2\pi\alpha)}{2\alpha}\frac{\sqrt{\pi} e^{\frac{w^2}{4}}\mathrm{erf}\left(\frac{w}{2}\right)}{w}+\frac{1}{2\alpha}\Bigg(\frac{d}{dz}{}_1F_1\left(1+\frac{z}{2};\frac{3}{2};\frac{w^2}{4}\right)\Bigg|_{z=0}\right.\nonumber\\
&\left.\quad-\frac{d}{dz}{}_1F_1\left(1-\frac{z}{2};\frac{3}{2};\frac{w^2}{4}\right)\Bigg|_{z=0}\Bigg)\right\}.
\end{align}
Note that
\begin{align*}
2\frac{d}{dz}{}_1F_1\left(1+\frac{z}{2};\frac{3}{2};\frac{w^2}{4}\right)&=2\frac{d}{dz}\sum_{n=0}^\infty \frac{\left(1+\frac{z}{2}\right)_n}{\left(3/2\right)_n}\frac{\left(w^2/4\right)^n}{n!} \nonumber\\
&=2\sum_{n=0}^\infty \frac{\left(w^2/4\right)^n}{\left(3/2\right)_nn!}\frac{d}{dz}\prod_{j=0}^{n-1}\left(1+\frac{z}{2}+j\right) \nonumber\\
&=\sum_{n=0}^\infty \frac{\left(w^2/4\right)^n}{\left(3/2\right)_nn!}\left(1+\frac{z}{2}\right)_n \sum_{j=0}^{n-1}\frac{1}{\left(1+\frac{z}{2}+j\right)},
\end{align*}
so that
\begin{align}\label{1F1 +}
2\frac{d}{dz}{}_1F_1\left(1+\frac{z}{2};\frac{3}{2};\frac{w^2}{4}\right)\Bigg|_{z=0}&=\sum_{n=0}^\infty \frac{\left(w^2/4\right)^n}{\left(3/2\right)_n}\left(\psi(n+1)+\gamma\right),
\end{align}
where we use the well-known result \cite[p.~54, Equation (3.10)]{temme} for $n\in\mathbb{N}\cup\{0\}$:
\begin{equation*}
\psi(n+1)=-\g+\sum_{m=0}^{\infty}\left(\frac{1}{m+1}-\frac{1}{n+1+m}\right).
\end{equation*}
Similarly,
\begin{align}\label{1F1 -}
-2\frac{d}{dz}{}_1F_1\left(1-\frac{z}{2};\frac{3}{2};\frac{w^2}{4}\right)\Bigg|_{z=0}=\sum_{n=0}^\infty \frac{\left(w^2/4\right)^n}{\left(3/2\right)_n}(\psi(n+1)+\gamma).
\end{align}
Therefore, from \eqref{L 1}, \eqref{1F1 +}, \eqref{1F1 -},  we have
\begin{align}\label{final L with alpha}
L_1(w,\a)
%=\lim_{z\rightarrow 0}\left(\frac{\zeta(1+z)}{2\alpha^{1+z}}{}_1F_1\left(\frac{2+z}{2};\frac{3}{2};\frac{w^2}{4}\right)+\frac{\zeta(z)}{\alpha z}{}_1F_1\left(\frac{2-z}{2};\frac{3}{2};\frac{w^2}{4}\right)\right) \nonumber\\
&=\frac{\gamma-\log(2\pi\alpha)}{2\alpha}\frac{\pi}{w^2} e^{\frac{w^2}{4}}\mathrm{erf}^2\left(\frac{w}{2}\right)
+\frac{1}{2\alpha}\frac{\sqrt{\pi}}{w}\mathrm{erf}\left(\frac{w}{2}\right)\sum_{n=0}^\infty \frac{(w^2/4)^n}{\left(3/2\right)_n}(\psi(n+1)+\gamma)\nonumber\\
&=\frac{\gamma-\log(2\pi\alpha)}{2\alpha}\mathcal{C}(w)
+\frac{1}{2\alpha}\mathcal{B}(w),
\end{align}
using the definitions in \eqref{cw} and \eqref{bw}. Now the asymptotic expansion of $\zeta_{w}(s, a)$ \cite[p.~38, Theorem 5.3.1]{dkRiMS1}, for $-1<\textup{Re}(s)<2$, 
\begin{align}\label{hermiteeqn1}
\zeta_w(s,a+1)=-\frac{a^{-s}}{2}A_w(s-1)+\frac{a^{1-s}}{s-1}A_{iw}(1-s)+O_{s,w}\left(a^{-\textup{Re}(s)-1}\right), \ \mathrm{as}\ a\to\infty,
\end{align}
implies that the series in \eqref{genramhureqeqn} are absolutely and uniformly convergent in a neighborhood of $z=0$. Hence, we can take the limit inside the series on the extreme left-hand side of \eqref{genramhureqeqn}. This gives, using \eqref{varphi}, 
\begin{align}\label{seplim}
%%&\lim_{z\to 0}\sum_{m=1}^\infty\left\{\zeta_w(z+1,m\alpha+1)-\frac{\sqrt{\pi}\mathrm{erfi}\left(\frac{w}{2}\right){}_1F_1\left(1-\frac{z}{2};\frac{3}{2};-\frac{w^2}{4}\right)}{z(m\alpha)^zw}+\frac{\sqrt{\pi}\mathrm{erf}\left(\frac{w}{2}\right){}_1F_1\left(1+\frac{z}{2};\frac{3}{2};\frac{w^2}{4}\right)}{2(m\alpha)^{z+1}w}\right\}\nonumber\\
\lim_{z\to 0}\sum_{m=1}^\infty\varphi_w(z,m\alpha)
&=\sum_{m=1}^\infty\lim_{z\to 0}\left\{\zeta_w(z+1,m\alpha+1)-\frac{\sqrt{\pi}\mathrm{erfi}\left(\frac{w}{2}\right){}_1F_1\left(1-\frac{z}{2};\frac{3}{2};-\frac{w^2}{4}\right)}{z(m\alpha)^zw}\right.\nonumber\\
&\left.\qquad+\frac{\sqrt{\pi}\mathrm{erf}\left(\frac{w}{2}\right){}_1F_1\left(1+\frac{z}{2};\frac{3}{2};\frac{w^2}{4}\right)}{2(m\alpha)^{z+1}w}\right\}\nonumber\\
&=\sum_{m=1}^\infty\left(\lim_{z\to 0}\left\{\left(\zeta_w(z+1,m\alpha+1)-\frac{\pi e^{-\frac{w^2}{4}}\mathrm{erfi}^2\left(\frac{w}{2}\right)}{zw^2}\right)\right.\right.\nonumber\\
&\left.\left.\qquad+\frac{\sqrt{\pi}\mathrm{erf}\left(\frac{w}{2}\right){}_1F_1\left(1+\frac{z}{2};\frac{3}{2};\frac{w^2}{4}\right)}{2(m\alpha)^{z+1}w}\right\}+L_{2}(w, \a, m)\right),
\end{align}
where $L_2(w, \a, m)$ is defined as
\begin{align}\label{l2wam}
L_{2}(w, \a, m):=\lim_{z\rightarrow 0}\left(\frac{\pi e^{-\frac{w^2}{4}}\mathrm{erfi}^2\left(\frac{w}{2}\right)}{zw^2}-\frac{\sqrt{\pi}\mathrm{erfi}\left(\frac{w}{2}\right)}{z(m\alpha)^zw}{}_1F_1\left(1-\frac{z}{2};\frac{3}{2};-\frac{w^2}{4}\right)\right). 
\end{align}
Invoking \eqref{equv}, \eqref{luslus} in \eqref{seplim}, we are led to
\begin{align}\label{lim in terms of l2}
\lim_{z\to 0}\sum_{m=1}^\infty\varphi_w(z,m\alpha)
&=\sum_{m=1}^\infty\left\{-\psi_w(m\alpha+1)+\frac{\mathcal{C}(w)}{2m\alpha}+L_{2}(w, \a, m)\right\}.
\end{align}
We now use \eqref{L 1} in \eqref{l2wam} to see that
\begin{align}\label{lim of l2}
L_{2}(w, \a, m)&=\lim_{z\rightarrow 0}\left\{\frac{\sqrt{\pi}\mathrm{erfi}(w/2)}{w}\left[\frac{e^{-w^2/4}\sqrt{\pi}\mathrm{erfi}(w/2)}{zw}-\left(1-z\log(m\alpha)+O(|z|^2)\right)\right.\right. \nonumber\\
&\left.\left.\qquad\times\left(\frac{\sqrt{\pi}e^{-\frac{w^2}{4}}\mathrm{erfi}\left(\frac{w}{2}\right)}{wz}+\frac{d}{dz}{}_1F_1\left(1-\frac{z}{2};\frac{3}{2};-\frac{w^2}{4}\right)\Bigg|_{z=0}+O(|z|)\right)\right]\right\}\nonumber\\
&=\frac{\sqrt{\pi}\mathrm{erfi}(w/2)}{w}\left(\frac{\sqrt{\pi}}{w}e^{-\frac{w^2}{4}}\mathrm{erfi}\left(\frac{w}{2}\right)\log(m\alpha)-\frac{d}{dz}{}_1F_1\left(1-\frac{z}{2};\frac{3}{2};-\frac{w^2}{4}\right)\Bigg|_{z=0}\right)\nonumber\\
&=\frac{1}{w^2}\pi e^{-\frac{w^2}{4}}\mathrm{erfi}^2(w/2)\log(m\alpha)+\frac{\sqrt{\pi}\mathrm{erfi}(w/2)}{2w}\sum_{n=0}^\infty \frac{(-w^2/4)^n}{\left(3/2\right)_n}(\psi(n+1)+\gamma)\nonumber\\
&=\mathcal{C}(iw)\log(m\alpha)+\frac{1}{2}\mathcal{B}(iw),
\end{align}
where in the penultimate step we used \eqref{1F1 -} and in the ultimate step \eqref{cw} and \eqref{bw}. Substitute \eqref{lim of l2} in \eqref{lim in terms of l2}, thereby obtaining
\begin{align}\label{limit of sum}
\lim_{z\to 0}\sum_{m=1}^\infty\varphi_w(z,m\alpha)
&=\sum_{m=1}^\infty\left\{-\psi_w(m\alpha+1)+\frac{\mathcal{C}(w)}{2m\alpha}+\mathcal{C}(iw)\log(m\alpha)+\frac{1}{2}\mathcal{B}(iw)\right\}\nonumber\\
&=-\sum_{m=1}^\infty\lambda_w(m\alpha),
\end{align}
which follows from \eqref{lamdaw}. Equations \eqref{final L with alpha} and \eqref{limit of sum} together prove the claim \eqref{claim}. This completes the proof of the theorem.
%\begin{align}\label{limit alpha}
%&\lim_{z\to 0}\left(\sum_{m=1}^{\infty}\varphi_w(z,m\a)-\frac{\zeta(z+1)A_w(z)}{2\alpha^{z+1}}-\frac{\zeta(z)A_w(-z)}{\alpha z}\right)\nonumber\\
%&=-\sum_{m=1}^\infty\lambda_w(m\alpha)-\frac{\gamma-\log(2\pi\alpha)}{2\alpha}\mathcal{C}(w)-\frac{1}{2\alpha}\mathcal{B}(w).
%\end{align}
%Finally, we arrive at \eqref{ramanujanw} letting $z\to0$ in \eqref{genramhureqeqn} and using the fact discussed in starting of the proof and \eqref{limit alpha}.
\end{proof}

\begin{proof}[Corollary \textup{\ref{w=0 of main thm}}][]
Let $w\to0$ on both sides of \eqref{ramanujanweqn}. Note that $\lim_{w\to0}\mathcal{C}(w)=1$ and $\lim_{w\to0}\mathcal{B}(w)=0$ as $\lim_{w\to0}\frac{\mathrm{erf}(w/2)}{w}=\frac{1}{\sqrt{\pi}}$ and $\psi(1)=-\gamma$. These facts then give $\lim_{w\to0}\lambda_w(x)=\lambda(x)$. This proves \eqref{w1.26}.
\end{proof}

\section{Asymptotics of some integrals with $\Xi(t)$ in their integrands}\label{asymptotics}
Here we prove Theorem \ref{asymp of int} and then recover two results from it, one of which is new.
\begin{proof}[Theorem \textup{\ref{asymp of int}}][]
%%We start with the following result \cite[p.~76, Equation (9.2.9)]{dkRiMS1}
%%\begin{align}\label{inbet}
%%&\frac{e^{-\frac{w^2}{4}}\pi^{\frac{z-3}{2}}}{2\sqrt{\a}}\int_0^\infty \Gamma\left(\frac{z-1+it}{4}\right)\Gamma\left(\frac{z-1-it}{4}\right)\Xi\left(\frac{t+iz}{2}\right)\Xi\left(\frac{t-iz}{2}\right)\frac{\Delta_2\left(\alpha,\frac{z}{2},w,\frac{1+it}{2}\right)}{(z+1)^2+t^2}dt\nonumber\\
%%&=-\frac{1}{\pi}\Gamma\left(\tfrac{z}{2}\right)\sum_{n=1}^\infty\sigma_{-z}(n) \int_0^\infty \big\{e^{\frac{-w^2}{2}}{}_1K_{\frac{z}{2},iw}(2\alpha x)+\beta{}_1K_{\frac{z}{2},w}(2\beta x)\big\}\left({}_2F_1\left(1,\frac{z}{2};\frac{1}{2};-\frac{x^2}{\pi^2n^2}\right)-1\right)x^{\frac{z-2}{2}}dx\nonumber\\
%%&-\frac{\G(z+1)e^{\frac{-w^2}{4}}}{2^{z+1}\sqrt{\a}}\bigg\{\a^{\frac{(z+1)}{2}}\left(\frac{\zeta(z+1)A_w(z)}{2\alpha^{z+1}}+\frac{\zeta(z)A_w(-z)}{\alpha z}\right)+\b^{\frac{(z+1)}{2}}\left(\frac{\zeta(z+1)A_{iw}(z)}{2\b^{z+1}}+\frac{\zeta(z)A_{iw}(-z)}{\b z}\right)\bigg\},
%%\end{align}
We start with \eqref{genramhureqeqn},
\begin{align}\label{thm1.5}
&\frac{2^{z-1}\pi^{\frac{z-3}{2}}}{\Gamma(z+1)}\int_0^\infty \Gamma\left(\frac{z-1+it}{4}\right)\Gamma\left(\frac{z-1-it}{4}\right)\Xi\left(\frac{t+iz}{2}\right)\Xi\left(\frac{t-iz}{2}\right)\frac{\Delta_2\left(\alpha,\frac{z}{2},w,\frac{1+it}{2}\right)}{(z+1)^2+t^2}dt\nonumber\\
&=\a^{\frac{z+1}{2}}\left(\sum_{m=1}^{\infty}\varphi_w(z,m\a)-\frac{\zeta(z+1)A_w(z)}{2\alpha^{z+1}}-\frac{\zeta(z)A_w(-z)}{\alpha z}\right).
\end{align}
From \cite[p.~75, Equation (9.2.10)]{dkRiMS1}, we have
\begin{align}\label{alphawala}
&-\frac{1}{\pi}\Gamma\left(\frac{z}{2}\right)\sum_{n=1}^\infty\sigma_{-z}(n) \int_0^\infty e^{\frac{-w^2}{2}}{}_1K_{\frac{z}{2},iw}(2\alpha x)\left({}_2F_1\left(1,\frac{z}{2};\frac{1}{2};-\frac{x^2}{\pi^2n^2}\right)-1\right)x^{\frac{z-2}{2}}\, dx\nonumber\\
&=\frac{\a^{\frac{z}{2}}\G(z+1)e^{-\frac{w^2}{4}}}{2^{z+1}}\sum_{m=1}^{\infty}\varphi_{w}(z,m\a),
\end{align}
where ${}_1K_{z,w}(x)$ is the generalized modified Bessel function introduced in \cite[p.~11, Equation (1.1.37)]{dkRiMS1} by
\begin{align}\label{def}
{}_1K_{z,w}(x)&:=\frac{1}{2\pi i}\int_{(c)}\Gamma\left(\frac{1+s-z}{2}\right)\Gamma\left(\frac{1+s+z}{2}\right) {}_1F_1\left(\frac{1+s-z}{2};\frac{3}{2};-\frac{w^2}{4}\right)\nonumber\\
&\qquad\qquad\times {}_1F_1\left(\frac{1+s+z}{2};\frac{3}{2};-\frac{w^2}{4}\right)2^{s-1}x^{-s}\, ds,
\end{align}
where $z, w \in\mathbb{C}$, $x\in\mathbb{C}\backslash\{x\in\mathbb{R}: x\leq 0\}$ and $c:=$Re$(s)>-1\pm$Re$(z)$. 
Substituting \eqref{alphawala} in \eqref{thm1.5}, we see that
\begin{align}\label{simp}
&\frac{e^{-\frac{w^2}{4}}\pi^{\frac{z-3}{2}}}{2}\int_0^\infty \Gamma\left(\frac{z-1+it}{4}\right)\Gamma\left(\frac{z-1-it}{4}\right)\Xi\left(\frac{t+iz}{2}\right)\Xi\left(\frac{t-iz}{2}\right)\frac{\Delta_2\left(\alpha,\frac{z}{2},w,\frac{1+it}{2}\right)}{(z+1)^2+t^2}dt\nonumber\\
&=-\frac{2\sqrt{\alpha}}{\pi}\Gamma\left(\frac{z}{2}\right)\sum_{n=1}^\infty\sigma_{-z}(n) \int_0^\infty e^{\frac{-w^2}{2}}{}_1K_{\frac{z}{2},iw}(2\alpha x)\left({}_2F_1\left(1,\frac{z}{2};\frac{1}{2};-\frac{x^2}{\pi^2n^2}\right)-1\right)x^{\frac{z-2}{2}}dx\nonumber\\
&\qquad-\frac{\G(z+1)e^{\frac{-w^2}{4}}}{2^{z}}\a^{\frac{(z+1)}{2}}\left(\frac{\zeta(z+1)A_w(z)}{2\alpha^{z+1}}+\frac{\zeta(z)A_w(-z)}{\alpha z}\right).
\end{align}
We first find the asymptotic estimate of the integral inside the sum on the right-hand side of \eqref{simp}. Note that
\begin{align}\label{cov}
&\int_0^\infty {}_1K_{\frac{z}{2},iw}(2\alpha x)\left({}_2F_1\left(1,\frac{z}{2};\frac{1}{2};-\frac{x^2}{\pi^2n^2}\right)-1\right)x^{\frac{z-2}{2}}dx\nonumber\\
&=\frac{1}{(2\alpha)^{z/2}}\int_0^\infty {}_1K_{\frac{z}{2},iw}(x)\left({}_2F_1\left(1,\frac{z}{2};\frac{1}{2};-\frac{x^2}{4\pi^2n^2\alpha^2}\right)-1\right)x^{\frac{z-2}{2}}dx.
\end{align}
Using the series definition of ${}_2F_1$, for $m\in\mathbb{N}$, as $\alpha\to\infty$, we have
$${}_2F_1\left(1,\frac{z}{2};\frac{1}{2};-\frac{x^2}{4\pi^2n^2\alpha^2}\right)-1=\sum_{k=1}^{m-1}\frac{(-1)^k(z/2)_k}{(1/2)_k}\left(\frac{x^2}{4\pi^2n^2\alpha^2}\right)^k+O_{z}\left(\frac{x^{2m}}{\alpha^{2m}n^{2m}}\right).$$
Substituting the above result in \eqref{cov}, we see that as $\alpha\to\infty$
\begin{align}\label{integral bound-1}
&\int_0^\infty {}_1K_{\frac{z}{2},iw}(2\alpha x)\left({}_2F_1\left(1,\frac{z}{2};\frac{1}{2};-\frac{x^2}{\pi^2n^2}\right)-1\right)x^{\frac{z-2}{2}}dx\nonumber\\
&=\frac{1}{(2\alpha)^{z/2}}\left\{\sum_{k=1}^{m-1}\frac{(-1)^k(z/2)_k}{(1/2)_k(4\pi^2n^2\alpha^2)^k}\int_0^\infty x^{\frac{z}{2}+2k-1} {}_1K_{\frac{z}{2},iw}(x)dx\right.\nonumber\\
&\qquad\left.\qquad\quad+O_{z,m}\left(\frac{1}{(n\alpha)^{2m}}\int_0^\infty x^{\frac{z}{2}+2m-1} {}_1K_{\frac{z}{2},iw}(x)dx\right)\right\}\nonumber\\
&=\frac{1}{(2\alpha)^{z/2}}\left\{\sum_{k=1}^{m-1}\frac{(-1)^k(z/2)_k\Gamma\left(k+\frac{1}{2}\right)\Gamma\left(\frac{z+1}{2}+k\right)}{(1/2)_k(4\pi^2n^2\alpha^2)^k}{}_1F_1\left(\frac{z+1}{2}+k;\frac{3}{2};\frac{w^2}{4}\right){}_1F_1\left(\frac{1}{2}+k;\frac{3}{2};\frac{w^2}{4}\right)2^{\frac{z}{2}+2k-1}\right.\nonumber\\
&\qquad\qquad\quad\left.+O_{z,m,w}\left(\frac{1}{(n\alpha)^{2m}}\right)\right\},
\end{align}
where we used the definition of ${}_1K_{\frac{z}{2},iw}(x)$ from \eqref{def} and the fact that integral inside the big -O bound is convergent and independent of $\alpha$. An application of duplication formula of gamma function in \eqref{integral bound-1} yields
\begin{align}\label{integral bound1}
&\int_0^\infty {}_1K_{\frac{z}{2},iw}(2\alpha x)\left({}_2F_1\left(1,\frac{z}{2};\frac{1}{2};-\frac{x^2}{\pi^2n^2}\right)-1\right)x^{\frac{z-2}{2}}dx\nonumber\\
&=\frac{\pi}{\alpha^{z/2}2^{z}}\sum_{k=1}^{m-1}\frac{(-1)^k\Gamma\left(z+2k\right)}{(2\pi n\alpha)^{2k}\Gamma(z/2)}{}_1F_1\left(\frac{z+1}{2}+k;\frac{3}{2};\frac{w^2}{4}\right){}_1F_1\left(\frac{1}{2}+k;\frac{3}{2};\frac{w^2}{4}\right)+O_{z,m,w}\left(\frac{1}{n^{2m}\alpha^{2m+\frac{z}{2}}}\right),
\end{align}
%%Replacing $w$ by $iw$ and $\alpha$ by $\beta$ in \eqref{integral bound1}, we find that
%%\begin{align}\label{integral bound2}
%%&\int_0^\infty {}_1K_{\frac{z}{2},w}(2\beta x)\left({}_2F_1\left(1,\frac{z}{2};\frac{1}{2};-\frac{x^2}{\pi^2n^2}\right)-1\right)x^{\frac{z-2}{2}}dx\nonumber\\
%%&=\frac{\pi}{\beta^{z/2}2^{z}}\sum_{k=1}^{m-1}\frac{(-1)^k\Gamma\left(z+2k\right)}{(2\pi n\beta)^{2k}\Gamma(z/2)}{}_1F_1\left(\frac{z+1}{2}+k;\frac{3}{2};-\frac{w^2}{4}\right){}_1F_1\left(\frac{1}{2}+k;\frac{3}{2};-\frac{w^2}{4}\right)+O_{z,m,w}\left(\frac{1}{(n\beta)^{2m}}\right).
%%\end{align}
Equations \eqref{simp} and \eqref{integral bound1} along with the fact $\sum_{n=1}^\infty\sigma_a(n)n^{-s}=\zeta(s)\zeta(s-a),\ \mathrm{Re}(s)>\max\{1,\ 1+\mathrm{Re}(a)\}$, imply
\begin{align}
&\frac{e^{-\frac{w^2}{4}}\pi^{\frac{z-3}{2}}}{2}\int_0^\infty \Gamma\left(\frac{z-1+it}{4}\right)\Gamma\left(\frac{z-1-it}{4}\right)\Xi\left(\frac{t+iz}{2}\right)\Xi\left(\frac{t-iz}{2}\right)\frac{\Delta_2\left(\alpha,\frac{z}{2},w,\frac{1+it}{2}\right)}{(z+1)^2+t^2}dt\nonumber\\
&=-\frac{e^{-w^2/2}}{\alpha^{\frac{z-1}{2}}2^{z-1}}\sum_{k=1}^{m-1}\frac{(-1)^k\Gamma\left(z+2k\right)}{(2\pi\alpha)^{2k}}\zeta(2k)\zeta(z+2k){}_1F_1\left(\frac{z+1}{2}+k;\frac{3}{2};\frac{w^2}{4}\right){}_1F_1\left(\frac{1}{2}+k;\frac{3}{2};\frac{w^2}{4}\right)\nonumber\\
&\quad-\frac{\G(z+1)e^{-\frac{w^2}{4}}}{2^{z}}\left(\frac{\zeta(z+1)A_w(z)}{2\alpha^{\frac{z+1}{2}}}+\frac{\zeta(z)A_w(-z)}{ z\alpha^{\frac{1-z}{2}}}\right)+O_{z,m,w}\left(\frac{1}{\alpha^{2m+\frac{z}{2}}}\right).
\end{align}
%%Letting $\beta=1/\alpha$ in the above equation and simplifying, we obtain as $\alpha\to\infty$,
%%\begin{align}
%%&\frac{e^{-\frac{w^2}{4}}\pi^{\frac{z-3}{2}}}{2\sqrt{\a}}\int_0^\infty \Gamma\left(\frac{z-1+it}{4}\right)\Gamma\left(\frac{z-1-it}{4}\right)\Xi\left(\frac{t+iz}{2}\right)\Xi\left(\frac{t-iz}{2}\right)\frac{\Delta_2\left(\alpha,\frac{z}{2},w,\frac{1+it}{2}\right)}{(z+1)^2+t^2}dt\nonumber\\
%%&=-\frac{1}{2^{z}}\sum_{k=1}^{m-1}\frac{(-1)^k\Gamma\left(z+2k\right)}{(2\pi)^{2k}}\zeta(2k)\zeta(z+2k){}_1F_1\left(\frac{z+1}{2}+k;\frac{3}{2};-\frac{w^2}{4}\right){}_1F_1\left(\frac{1}{2}+k;\frac{3}{2};-\frac{w^2}{4}\right)\alpha^{2k+\frac{z}{2}-1}\nonumber\\
%%&\quad-\frac{\G(z+1)e^{\frac{-w^2}{4}}}{2^{z+1}}\bigg\{\frac{1}{2}\zeta(z+1)A_{iw}(z)\alpha^{\frac{z}{2}}+\frac{\zeta(z)A_{iw}(-z)}{z}\alpha^{-\frac{z}{2}}\bigg\}\nonumber\\
%%&\qquad+O_{z,m,w}\left(\alpha^{2m-1}\right).\nonumber
%%\end{align}
Finally divide both sides of the above equation by $\frac{1}{2}e^{-\frac{w^2}{4}}\pi^{\frac{z-3}{2}}$ to complete the proof of the theorem.
\end{proof}

\begin{proof}[Corollary \textup{\ref{drz asym}}][]
Note that ${}_1F_1(a;c;0)=1$ and $\Delta_2\left(\alpha,\frac{z}{2},w,\frac{1+it}{2}\right)=2\cos\left(\frac{1}{2}t\log(\alpha)\right)$.
Now \eqref{drz asym eqn} is an easy consequence of \eqref{asymp of int eqn}.
\end{proof}

\begin{proof}[Corollary \textup{\ref{z=0 asym}}][]
Let $z=0$ on both sides of \eqref{asymp of int eqn}. It is easy to get the left-hand side of \eqref{z=0 asym eqn} from the integral in \eqref{asymp of int eqn} up to the factor $e^{\frac{w^2}{4}}$ coming from $\Delta_2\left(\alpha,0,w,\frac{1+it}{2}\right)$. Also, it is easy to take $z=0$ in the finite sum on the right-hand side of \eqref{asymp of int eqn}. Now observe that 
\begin{align}\label{tbr}
\lim_{z\to0}\left\{\frac{\zeta(z+1)A_{w}(z)}{2\alpha^{\frac{1+z}{2}}}+\frac{\zeta(z)A_{w}(-z)}{z\alpha^{\frac{1-z}{2}}}\right\}&=\lim_{z\to0}\left\{\alpha^{\frac{1+z}{2}}\left(\frac{\zeta(z+1)A_{w}(z)}{\alpha^{z+1}}+\frac{\zeta(z)A_{w}(-z)}{z\alpha}\right)\right\}\nonumber\\
&=\sqrt{\alpha}\ L_1\left(w,\alpha\right)\nonumber\\
&=\sqrt{\alpha}\left(\frac{\gamma-\log(2\pi\alpha)}{2\alpha}\mathcal{C}(w)+\frac{1}{2\alpha}\mathcal{B}(w)\right),
\end{align}
where in the second last step we used \eqref{lim-1} and in the last step \eqref{final L with alpha} is invoked. The proof of \eqref{z=0 asym eqn} is then complete upon using the above facts.
\end{proof}

\section*{\textbf{Acknowledgments}}
The first author's research was partially supported by the Swarnajayanti Fellowship Grant SB/SJF/2021-22/08 of SERB (Govt. of India). The second author's research was supported by the Fulbright-Nehru Postdoctoral Fellowship Grant 2846/FNPDR/2022. Both the authors sincerely thank the respective funding agencies for their support.

%and the CRG grant CRG/2020/002367 of SERB.

\end{document}